\newtheorem{lem}{Lemma}[section]
\newtheorem{prop}{Proposition}[section]
\newtheorem{rem}{Remark}[section]
\numberwithin{equation}{section}
\newcommand{\beqa}{\begin{eqnarray}}
\newcommand{\eeqa}{\end{eqnarray}}
\newcommand{\nc}{\newcommand}
\newcommand{\rnc}{\renewcommand}
\nc{\cal}{\mathcal}
\nc{\goth}{\mathfrak}
\rnc{\bold}{\mathbf}
\renewcommand{\frak}{\mathfrak}
\renewcommand{\Bbb}{\mathbb}
\nc\kJ{\mathfrak J}
\nc\oc{\overline{c}}
\nc\tg{\tilde{g}}
\nc\tG{\tilde{G}}
\nc\tA{\tilde{A}}
\nc\tB{\tilde{B}}
\nc\tC{\tilde{C}}
\nc\bO{\overline{O}}
\nc\ma{\mathfrak a}
\nc\mb{\mathfrak b}
\nc\mc{\mathfrak c}
\nc\K{\mathbb K}
\nc{\Cal}{\mathcal}
\nc{\Xp}[1]{X^+(#1)}
\nc{\Xm}[1]{X^-(#1)}
\nc{\on}{\operatorname}
\nc{\ch}{\mbox{ch}}
\nc{\Z}{{\bold Z}}
\nc{\J}{{\mathcal J}}
\nc{\C}{{\bold C}}
\nc{\Q}{{\bold Q}}
\nc{\N}{{\Bbb N}}
\nc\beq{\begin{equation}}
\nc\enq{\end{equation}}
\nc\lan{\langle}
\nc\ran{\rangle}
\nc\bsl{\backslash}
\nc\mto{\mapsto}
\nc\lra{\leftrightarrow}
\nc\hra{\hookrightarrow}
\nc\sm{\smallmatrix}
\nc\esm{\endsmallmatrix}
\nc\sub{\subset}
\nc\ti{\tilde}
\nc\nl{\newline}
\nc\fra{\frac}
\nc\und{\underline}
\nc\ov{\overline}
\nc\ot{\otimes}
\nc\bbq{\bar{\bq}_l}
\nc\bcc{\thickfracwithdelims[]\thickness0}
\nc\ad{\text{\rm ad}}
\nc\Ad{\text{\rm Ad}}
\nc\Hom{\text{\rm Hom}}
\nc\End{\text{\rm End}}
\nc\Ind{\text{\rm Ind}}
\nc\Res{\text{\rm Res}}
\nc\Ker{\text{\rm Ker}}
\rnc\Im{\text{Im}}
\nc\sgn{\text{\rm sgn}}
\nc\tr{\text{\rm tr}}
\nc\Tr{\text{\rm Tr}}
\nc\supp{\text{\rm supp}}
\nc\card{\text{\rm card}}
\nc\bst{{}^\bigstar\!}
\nc\he{\heartsuit}
\nc\clu{\clubsuit}
\nc\spa{\spadesuit}
\nc\di{\diamond}
\nc\cW{\cal W}
\nc\cG{\cal G}
\nc\al{\alpha}
\nc\bet{\beta}
\nc\ga{\gamma}
\nc\de{\delta}
\nc\ep{\epsilon}
\nc\io{\iota}
\nc\om{\omega}
\nc\si{\sigma}
\rnc\th{\theta}
\nc\ka{\kappa}
\nc\la{\lambda}
\nc\ze{\zeta}
\nc\vp{\varpi}
\nc\vt{\vartheta}
\nc\vr{\varrho}
\nc\Ga{\Gamma}
\nc\De{\Delta}
\nc\Om{\Omega}
\nc\Si{\Sigma}
\nc\Th{\Theta}
\nc\La{\Lambda}
\nc\op{\overline{p}}
\nc\oE{\overline{E}}
\nc\onabla{\overline{\nabla}}
\nc\oDelta{\overline{\Delta}}
\nc\bepsilon{\overline{\epsilon}}
\nc\bak{\overline{k}}
\nc\bgamma{\overline{\gamma}}
\nc\kb{\mathfrak b}
\nc\hQ{\widehat Q}
\nc\kN{\mathfrak N}
\nc\kM{\mathfrak M}
\nc\kX{\mathfrak X}
\nc\kA{\mathfrak A}
\nc\kn{\mathfrak n}
\nc\km{\mathfrak m}
\nc\tfQ{\widehat{\mathfrak Q}}
\nc\boa{\bold a}
\nc\bob{\bold b}
\nc\boc{\bold c}
\nc\bod{\bold d}
\nc\boe{\bold e}
\nc\bof{\bold f}
\nc\bog{\bold g}
\nc\boh{\bold h}
\nc\boi{\bold i}
\nc\boj{\bold j}
\nc\bok{\bold k}
\nc\bol{\bold l}
\nc\bom{\bold m}
\nc\bon{\bold n}
\nc\boo{\bold o}
\nc\bop{\bold p}
\nc\boq{\bold q}
\nc\bor{\bold r}
\nc\bos{\bold s}
\nc\bou{\bold u}
\nc\bov{\bold v}
\nc\bow{\bold w}
\nc\boz{\bold z}
\nc\ba{\bold A}
\nc\bb{\bold B}
\nc\bc{\bold C}
\nc\bd{\bold D}
\nc\be{\bold E}
\nc\bg{\bold G}
\nc\bh{\bold H}
\nc\bi{\bold I}
\nc\bj{\bold J}
\nc\bk{\bold K}
\nc\bl{\bold L}
\nc\bm{\bold M}
\nc\bn{\bold N}
\nc\bo{\bold O}
\nc\bp{\bold P}
\nc\bq{\bold Q}
\nc\br{\bold R}
\nc\bs{\bold S}
\nc\bt{\bold T}
\nc\bu{\bold U}
\nc\bv{\bold V}
\nc\bw{\bold W}
\nc\bz{\bold Z}
\nc\bx{\bold X}
\nc\ca{\mathcal A}
\nc\cb{\mathcal B}
\nc\cc{\mathcal C}
\nc\cd{\mathcal D}
\nc\ce{\mathcal E}
\nc\cf{\mathcal F}
\nc\cg{\mathcal G}
\rnc\ch{\mathcal H}
\nc\ci{\mathcal I}
\nc\cj{\mathcal J}
\nc\ck{\mathcal K}
\nc\cl{\mathcal L}
\nc\cm{\mathcal M}
\nc\cn{\mathcal N}
\nc\co{\mathcal O}
\nc\cp{\mathcal P}
\nc\cq{\mathcal Q}
\nc\car{\mathcal R}
\nc\cs{\mathcal S}
\nc\ct{\mathcal T}
\nc\cu{\mathcal U}
\nc\cv{\mathcal V}
\nc\cz{\mathcal Z}
\nc\cx{\mathcal X}
\nc\cy{\mathcal Y}
\nc\e[1]{E_{#1}}
\nc\ei[1]{E_{\delta - \alpha_{#1}}}
\nc\esi[1]{E_{s \delta - \alpha_{#1}}}
\nc\eri[1]{E_{r \delta - \alpha_{#1}}}
\nc\ed[2][]{E_{#1 \delta,#2}}
\nc\ekd[1]{E_{k \delta,#1}}
\nc\emd[1]{E_{m \delta,#1}}
\nc\erd[1]{E_{r \delta,#1}}
\nc\ef[1]{F_{#1}}
\nc\efi[1]{F_{\delta - \alpha_{#1}}}
\nc\efsi[1]{F_{s \delta - \alpha_{#1}}}
\nc\efri[1]{F_{r \delta - \alpha_{#1}}}
\nc\efd[2][]{F_{#1 \delta,#2}}
\nc\efkd[1]{F_{k \delta,#1}}
\nc\efmd[1]{F_{m \delta,#1}}
\nc\efrd[1]{F_{r \delta,#1}}
\nc\fa{\frak a}
\nc\fb{\frak b}
\nc\fc{\frak c}
\nc\fd{\frak d}
\nc\fe{\frak e}
\nc\ff{\frak f}
\nc\fg{\frak g}
\nc\fh{\frak h}
\nc\fj{\frak j}
\nc\fk{\frak k}
\nc\fl{\frak l}
\nc\fm{\frak m}
\nc\fn{\frak n}
\nc\fo{\frak o}
\nc\fp{\frak p}
\nc\fq{\frak q}
\nc\fr{\frak r}
\nc\fs{\frak s}
\nc\ft{\frak t}
\nc\fu{\frak u}
\nc\fv{\frak v}
\nc\fz{\frak z}
\nc\fx{\frak x}
\nc\fy{\frak y}
\nc\fA{\frak A}
\nc\fB{\frak B}
\nc\fC{\frak C}
\nc\fD{\frak D}
\nc\fE{\frak E}
\nc\fF{\frak F}
\nc\fG{\frak G}
\nc\fH{\frak H}
\nc\fJ{\frak J}
\nc\fK{\frak K}
\nc\fL{\frak L}
\nc\fM{\frak M}
\nc\fN{\frak N}
\nc\fO{\frak O}
\nc\fP{\frak P}
\nc\fQ{\frak Q}
\nc\fR{\frak R}
\nc\fS{\frak S}
\nc\fT{\frak T}
\nc\fU{\frak U}
\nc\fV{\frak V}
\nc\fZ{\frak Z}
\nc\fX{\frak X}
\nc\fY{\frak Y}
\nc\tfi{\ti{\Phi}}
\nc\bF{\bold F}
\rnc\bol{\bold 1}
\nc\ua{\bold U_\A}
\nc\qinti[1]{[#1]_i}
\nc\q[1]{[#1]_q}
\nc\xpm[2]{E_{#2 \delta \pm \alpha_#1}}  
\nc\xmp[2]{E_{#2 \delta \mp \alpha_#1}}
\nc\xp[2]{E_{#2 \delta + \alpha_{#1}}}
\nc\xm[2]{E_{#2 \delta - \alpha_{#1}}}
\nc\hik{\ed{k}{i}}
\nc\hjl{\ed{l}{j}}
\nc\qcoeff[3]{\left[ \begin{smallmatrix} {#1}& \\ {#2}& \end{smallmatrix}
\negthickspace \right]_{#3}}
\nc\qi{q}
\nc\qj{q}
\nc\ufdm{{_\ca\bu}_{\rm fd}^{\le 0}}
\nc\isom{\cong} 
\nc{\pone}{{\Bbb C}{\Bbb P}^1}
\nc{\pa}{\partial}
\nc{\F}{{\mathcal F}}
\nc{\Sym}{{\goth S}}
\nc{\A}{{\mathcal A}}
\nc{\arr}{\rightarrow}
\nc{\larr}{\longrightarrow}
\nc{\ri}{\rangle}
\nc{\lef}{\langle}
\nc{\W}{{\mathcal W}}
\nc{\uqatwoatone}{{U_{q,1}}(\su)}
\nc{\uqtwo}{U_q(\goth{sl}_2)}
\nc{\dij}{\delta_{ij}}
\nc{\divei}{E_{\alpha_i}^{(n)}}
\nc{\divfi}{F_{\alpha_i}^{(n)}}
\nc{\Lzero}{\Lambda_0}
\nc{\Lone}{\Lambda_1}
\nc{\ve}{\varepsilon}
\nc{\phioneminusi}{\Phi^{(1-i,i)}}
\nc{\phioneminusistar}{\Phi^{* (1-i,i)}}
\nc{\phii}{\Phi^{(i,1-i)}}
\nc{\Li}{\Lambda_i}
\nc{\Loneminusi}{\Lambda_{1-i}}
\nc{\vtimesz}{v_\ve \otimes z^m}
\nc{\asltwo}{\widehat{\goth{sl}_2}}
\nc\ag{\widehat{\goth{g}}}  
\nc\teb{\tilde E_\boc}
\nc\tebp{\tilde E_{\boc'}}
\newcommand{\eeq}{\end{equation}}
\newcommand{\ben}{\begin{eqnarray}}
\newcommand{\een}{\end{eqnarray}}
\begin{document}

\title[Askey-Wilson algebra revisited and tridiagonalization]
{Little and Big $q-$Jacobi Polynomials and the Askey-Wilson algebra}

\author{Pascal Baseilhac$^{\dagger}$, Xavier Martin$^{\dagger}$}
\address{$^\dagger$ Institut Denis-Poisson CNRS/UMR 7013 - Universit\'e de Tours - Universit\'e d'Orl\'eans
Parc de Grammont, 37200 Tours, 
FRANCE}
\email{pascal.baseilhac@idpoisson.fr; xavier.martin@idpoisson.fr}

\author{Luc Vinet$^{*}$}
\address{$^{*}$ Centre de recherches math\'ematiques Universit\'e de Montr\'eal, CNRS/UMI 3457, P.O. Box 6128, Centre-ville Station, Montr\'eal (Qu\'ebec), H3C 3J7 CANADA}
\email{vinet@CRM.UMontreal.CA}

\author{Alexei Zhedanov$^{\diamond,*}$}
\address{$^{\diamond}$ Department of Mathematics, Information School, Renmin University of China, Beijing 100872,
CHINA}
\email{zhedanov@yahoo.com}

\date{May 31, 2018}

\begin{abstract}
The little and big q-Jacobi polynomials are shown to arise as basis vectors for representations of the Askey-Wilson algebra. The operators that these polynomials respectively diagonalize are identified within the Askey-Wilson algebra generated by twisted primitive elements of $\mathfrak U_q(sl(2))$. The little q-Jacobi operator and a tridiagonalization of it are shown to realize the equitable embedding of the Askey-Wilson algebra into $\mathfrak U_q(sl(2))$.
\end{abstract}

\maketitle

\vskip -0.4cm

{\small MSC:\ 81R50;\ 81R10;\ 81U15;\ 39A70;\ 33D50;\ 39A13.}

{{\small  {\it \bf Keywords}:  Askey-Wilson algebra; Tridiagonalisation: Orthogonal polynomials}}

\vspace{7mm}


\section{Introduction}
This paper indicates how the little and big q-Jacobi polynomials \cite{KS} occur in the context of representations of the Askey-Wilson algebra. This structure was originally identified in \cite{7} as the algebra generated by the Askey-Wilson operator and the multiplication by the independent variable; its representations were shown to account for the bispectral properties of the Askey-Wilson polynomials. It has thereafter appeared in various contexts. 

The little q-Jacobi polynomials \cite{KS} have received algebraic interpretations some time ago, either as matrix elements of co-representations of the quantum group $SL_q(2)$ \cite{VS, K1, MMNNU1, MMNNU2} or equivalently \cite{FV1}, as matrix elements of q-exponentials in the generators of the quantum algebra $\mathfrak U_q(sl(2))$ \cite {FV2}. The big q-Jacobi polynomials have been connected to quantum 2-spheres \cite {NM}. We shall provide here a rather different algebraic setting for these q-polynomials as basis vectors for modules of the Askey-Wilson algebra.

On the one hand, consider twisted primitive elements in $\mathfrak U_q(sl(2))$ \cite{K2,KJ}. It is known that such elements provide an embedding of the Askey-Wilson algebra in $\mathfrak U_q(sl(2))$ \cite{GZ}. Take the holomorphic realization of $\mathfrak U_q(sl(2))$ and a specialization of the generators, it will be seen that the operators of which the little and big q-Jacobi polynomials are eigenfunctions belong to this realization of the Askey-Wilson algebra.

On the other hand, the polynomials of the Askey scheme and especially those under consideration here, are solutions of  bispectral problems. It has been shown recently how algebraic Heun operators can be associated to any such problems \cite {GrVZ}. For some choices of the generic parameters, the resulting Heun operator can be one of those corresponding to the polynomials of the Askey tableau and this provides a procedure often referred to as tridiagonalization \cite{IK1, IK2} to construct and study higher special functions from simpler ones. Clearly, bilinear combinations of the bispectral operators of classical polynomials will be tridiagonal in the eigenbasis . The main point is that such Heun operators generate when combined with one of the operators entering in the bispectral problem, an algebra that encodes the properties of the higher functions/polynomials. This algebraic viewpoint on tridiagonalization has been developed recently. The 4-parameter Wilson polynomials have thus been recovered and analysed in that perspective through the tridiagonalization of the 2-parameter hypergeometric operator \cite{GIVZ}. Similarly the characterization of the 4-parameter Bannai-Ito polynomials could be retrieved by observing that the embedding of the Bannai-Ito algebra into the superalgebra $\mathfrak osp$(1,2) involved tridiagonalizing the operator of which the little -1 Jacobi polynomials are eigenfunctions \cite {BGVZ}. It is hence natural at this point, to complete the picture by determining how the Askey-Wilson polynomials lend themselves to a tridiagonalization analysis. This is one of the goals of this paper. Typically one would work with the differential or difference operator of which the polynomials are eigenfunctions. This will also be the main approach here. It should be said that tridiagonalization can as well be applied to recurrence operators. As a matter of fact, it has already been shown that the recurrence coefficients of the Askey-Wilson polynomials can be obtained from those of the big q-Jacobi polynomials from the tridiagonalization of the recurrence operator of the latter polynomials \cite{TVZ}. 

It will be seen that the tridiagonalization of the q-difference operator, of which the little q-Jacobi polynomials are eigenfunctions, leads to the Askey-Wilson algebra in the fashion described above. It will be further observed that if the equitable presentation of $\mathfrak U_q(sl(2))$ is called upon \cite{ITW,T0}, the little  q-Jacobi operator and its tridiagonalized companion turn out to naturally take the form that generators of the Askey-Wilson algebra have when symmetrically embedded in $\mathfrak U_q(sl(2))$.

The paper will be organized as follows.  After having provided some relevant facts about  $\mathfrak U_q(sl(2))$, the Askey-Wilson algebra and its embedding in  $\mathfrak U_q(sl(2))$ are presented in Section 2. In Section 3, the little q-Jacobi polynomials are introduced via their q-difference equation, which is identified under special choices of the parameters as  the eigenvalue equation for one generator of the Askey-Wilson algebra through the realization inherited from that of  $\mathfrak U_q(sl(2))$. The eigenfunctions of a companion generator of the Askey-Wilson algebra are obtained and the expansion of the little q-Jacobi polynomials in this eigenbasis will be shown to yield an explicit expression of these polynomials. The tridiagonalization of the little q-Jacobi operator will be carried out in Section 4. This will provide an operator that is tridiagonal on the little q-Jacobi polynomials and that generates a new Askey- Wilson algebra together with the original little q-Jacobi operator. This will be obtained by noting that these operators take  the known form \cite{T}  of the Askey-Wilson generators under the embedding in  $\mathfrak U_q(sl(2))$ in terms the equitable generators of this quantum algebra \cite{T0}.
In Section 5 we shall turn to the big q-Jacobi polynomials and show that they are eigenfunctions of the same generator as for the little ones, in the initial embedding of the Askey-Wilson algebra in  $\mathfrak U_q(sl(2))$ albeit with  different and more general choices of the parameters. Their explicit expression will also be obtained as in section 3. As supplementary information, we shall show in Section 6 that a large class of big q-Jacobi polynomials can be obtained via tridiagonalization from special little q-Jacobi polynomials.
 The paper will end with a summary and outlook in Section 7.

\subsection{Notations}
In this paper, we fix a nonzero complex number $q$ which is not a root of unity.  We will use the standard $q$-shifted factorials (also called $q-$Pochhammer functions) \cite{KS}:
\beqa
(a;q)_n=\prod_{k=0}^{n-1}(1-aq^{k}).\label{poch}
\eeqa

\section{The Askey-Wilson algebra and its embedding in $\mathfrak U_q(sl(2))$}
We shall recap in this section the basic tools that shall be used in the remainder of the paper: first the standard Chevalley presentation of $\mathfrak U_q(sl(2))$  and second the equitable one, together with isomorphisms between the two. The realization of $\mathfrak U_q(sl(2))$  in terms of q-difference operators will then be recalled in subsection \ref{23} and  the Askey-Wilson algebra will be introduced via its embedding into $\mathfrak U_q(sl(2))$ in subsection \ref{24}.
\subsection{The Chevalley presentation of  $\mathfrak U_q(sl(2))$}
The Chevalley presentation of  $\mathfrak U_q(sl(2))$  consists of three generators denoted $S_\pm,s_3$. They  satisfy
\beqa
[s_3,S_\pm]=\pm S_\pm \qquad \mbox{and} \qquad
[S_+,S_-]=\frac{q^{2s_3}-q^{-2s_3}}{q-q^{-1}}\ .\label{Uqsl2}
\eeqa
The central element of  $\mathfrak U_q(sl(2))$ is the Casimir operator:
\beqa
\Omega = \frac{q^{-1}q^{2s_3}+ q q^{-2s_3}}{(q-q^{-1})^2} + S_+S_-.\label{Casimir}
\eeqa

\subsection{The equitable presentation of $\mathfrak U_q(sl(2))$}
The equitable presentation of  $\mathfrak U_q(sl(2))$  consists of three generators denoted $X,Y,Z$ \cite{ITW}.
\beqa
&& YY^{-1}=Y^{-1}Y=1,\quad 
\frac{qXY-q^{-1}YX}{q-q^{-1}}=1, \quad \frac{qYZ-q^{-1}ZY}{q-q^{-1}}=1, \quad \frac{qZX-q^{-1}XZ}{q-q^{-1}}=1.\label{equit}
\eeqa

In \cite[Lemma 5.1]{T0},  an isomorphism with the presentation (\ref{Uqsl2}) is given. Here we will use a special case. In our notation, it reads:
%
%
%
%
%
\beqa
X &=& q^{-2s_3} -(q-q^{-1})q^{1/2}S_+q^{-s_3},\label{XYZ}\\
 Y&=&q^{2s_3},\nonumber\\
Z&=& q^{-2s_3} + (1-q^{-2})q^{1/2} S_-q^{-s_3}.\nonumber
\eeqa

\subsection{The $q-$difference operators realization of $\mathfrak U_q(sl(2))$}\label{23}
An irreducible infinite dimensional representation $V_\nu$ with $2\nu \notin {\mathbb Z}_+$ can be realized by $q-$difference operators acting on the space of formal power series $f(z) = \sum_{k\in {\mathbb Z}_+} \mu_k z^k$ in the variable $z$. The lowest weight vector corresponds to $1$. Define $T_\pm(f(z))=f(q^{\pm 1} z)$. There exists an homomorphism \cite{S1}:
\beqa
&& q^{s_3}\mapsto q^{-\nu}T_+, \qquad \qquad \qquad \qquad q^{-s_3}\mapsto q^{\nu}T_-, \label{holo}\\
&& S_+ \mapsto z\frac{(q^{2\nu}T_- - q^{-2\nu}T_+)}{(q-q^{-1})},\qquad S_-\mapsto -z^{-1}\frac{(T_- - T_+)}{(q-q^{-1})}.\nonumber
\eeqa
On $V_\nu$, the eigenvalue of $\Omega$ (\ref{Casimir}) is given by:
\beqa
\omega_\nu=\frac{(q^{2\nu+1} + q^{-2\nu-1})}{(q-q^{-1})^2}.\label{Casimir-val}
\eeqa
Note that if $2\nu \in {\mathbb Z}_+$, the representation becomes reducible as $(S_+)^{2\nu+1}=0$. The corresponding invariant subspace of polynomials of degree $2\nu$. 

\subsection{The Askey-Wilson algebra}\label{24}
We now turn to the Askey-Wilson algebra and its embedding in $\mathfrak U_q(sl(2))$.
Let $c_0,\oc_0,c_1,\oc_1,\epsilon_0,\epsilon_1,\mu_0,\mu_1$ be arbitrary scalars. Define:
\beqa
W_0 &=& c_0 S_+q^{s_3} +  \oc_0 S_-q^{s_3} +  \epsilon_0 q^{2s_3} + \mu_0,\label{W}\\
W_1 &=& c_1 S_+q^{-s_3} +  \oc_1 S_-q^{-s_3} +  \epsilon_1 q^{-2s_3} + \mu_1.\nonumber
\eeqa
A third operator, namely
\beqa
G_1=qW_1W_0 -q^{-1}W_0W_1
\eeqa
 is constructed. By straightforward calculations, one finds:
\beqa
G_1=g_1 S_-^2 + g_2 S_-q^{-s_3} +  g_3 S_-q^{s_3} +   g'_2 S_+q^{-s_3} +  g'_3 S_+q^{s_3}  + g_4 q^{2s_3} + g_5q^{-2s_3} + g_6,\label{G1}
\eeqa
where $g_i,i=1,..,6$ and $g'_2,g'_3$ are expressed in terms of $c_j,\oc_j,\epsilon_j,j=0,1$:
\beqa
&&g_1=\oc_0\oc_1(q^2-q^{-2}),\nonumber\\
&& g_2=\oc_0\epsilon_1(q^2-q^{-2})q + \mu_0 \oc_1(q-q^{-1}),\quad g_3=\oc_1\epsilon_0(q^2-q^{-2})q^{-1} +  \mu_1\oc_0(q-q^{-1}),\nonumber\\
&& g'_2= \mu_0c_1(q-q^{-1}), \quad g'_3= \mu_1c_0(q-q^{-1}), \nonumber\\
&& g_4= -\frac{c_0\oc_1q^{-1}(q+q^{-1})}{(q-q^{-1})} + \mu_1\epsilon_0(q-q^{-1}),\quad g_5= -\frac{c_1\oc_0q(q+q^{-1})}{(q-q^{-1})} + \mu_0\epsilon_1(q-q^{-1}),\nonumber\\
&& g_6= (c_1\oc_0q +c_0\oc_1q^{-1})(q-q^{-1})\Omega + (\epsilon_0\epsilon_1 + \mu_0\mu_1)(q-q^{-1}).\nonumber
\eeqa

\begin{prop} $W_0,W_1,G_1$ satisfy the Askey-Wilson algebra:
\beqa 
\big[W_1,W_0\big]_q&=&G_1,\nonumber\\
\quad \big[W_0,G_1\big]_q &=& \rho_0 W_1 + \omega W_0 + \gamma_0 (W_0W_1+W_1W_0) + \gamma_1 W_0^2 + \eta_0,  \nonumber\\
\big[G_1,W_1\big]_q &=& \rho_1 W_0 + \omega W_1 + \gamma_1 (W_0W_1+W_1W_0) + \gamma_0 W_1^2 + \eta_1,  \nonumber
\eeqa
where
\beqa
\rho_0&=&-c_0\oc_0 (q+q^{-1})^2 - \mu_0^2(q-q^{-1})^2,\quad
\rho_1= -c_1\oc_1 (q+q^{-1})^2 - \mu_1^2(q-q^{-1})^2,\nonumber\\
\omega &=& (q-q^{-1})g_6 - 2\mu_0\mu_1 (q-q^{-1})^2,\nonumber\\
\gamma_0 &=& \mu_0(q-q^{-1})^2,\quad \gamma_1 = \mu_1(q-q^{-1})^2,\nonumber\\
\eta_0 &=& (q+q^{-1})\left( c_0\oc_0 \epsilon_1(q-q^{-1})^2 \Omega - \epsilon_0 (qc_1\oc_0 + q^{-1}c_0\oc_1)\right) - \mu_0(q-q^{-1})g_6 + \mu_1c_0\oc_0(q+q^{-1})^2 + \mu_0^2\mu_1(q-q^{-1})^2,\nonumber\\
\eta_1 &=& (q+q^{-1})\left( c_1\oc_1 \epsilon_0(q-q^{-1})^2 \Omega - \epsilon_1 (qc_0\oc_1 + q^{-1}c_1\oc_0)\right) - \mu_1(q-q^{-1})g_6 + \mu_0c_1\oc_1(q+q^{-1})^2 + \mu_1^2\mu_0(q-q^{-1})^2,\nonumber
\eeqa
\end{prop}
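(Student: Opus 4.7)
The strategy is direct computation in $\mathfrak U_q(sl(2))$. Three rewriting rules reduce any element built from $S_\pm$ and $q^{\pm s_3}$ to a PBW normal form: (i) $q^{s_3}S_\pm=q^{\pm 1}S_\pm q^{s_3}$ from~\eqref{Uqsl2}, used to push every power of $q^{s_3}$ to the right of every $S_\pm$; (ii) the commutator $[S_+,S_-]=(q^{2s_3}-q^{-2s_3})/(q-q^{-1})$ from~\eqref{Uqsl2}, used to reorder mixed products; and (iii) the Casimir identity $S_+S_-=\Omega-(q^{-1}q^{2s_3}+qq^{-2s_3})/(q-q^{-1})^2$ coming from~\eqref{Casimir}, used to eliminate every $S_+S_-$ pair in favour of the central scalar $\Omega$. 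The first relation $[W_1,W_0]_q=G_1$ is just the definition of $G_1$, and substituting~\eqref{W} into $qW_1W_0-q^{-1}W_0W_1$ and applying these rules recovers~\eqref{G1} with the stated $g_i,g_i'$ read off the coefficients of the surviving PBW monomials $\{1,\,q^{\pm 2s_3},\,S_\pm q^{\pm s_3},\,S_-^2\}$.

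For the second relation I would compute $[W_0,G_1]_q=qW_0G_1-q^{-1}G_1W_0$ directly, substituting~\eqref{W} for $W_0$ and~\eqref{G1} for $G_1$ and reducing each product to normal form. Any $S_-^3$ or $S_-^2 S_+$ cubic monomials arising in intermediate steps are eliminated via the Casimir, so the output lies in the linear span of $\{1,\,q^{\pm 2s_3},\,S_\pm q^{\pm s_3},\,S_-^2\}$ with coefficients polynomial in the parameters and in $\Omega$. The same normal-form reduction is applied to the right-hand side $\rho_0W_1+\omega W_0+\gamma_0(W_0W_1+W_1W_0)+\gamma_1 W_0^2+\eta_0$: the square $W_0^2$ contributes the unique $S_-^2 q^{2s_3}$ term with coefficient $\oc_0^{\,2}$ (fixing $\gamma_1$), the $q^{\pm 2s_3}$ matchings fix $\rho_0$ and $\gamma_0$, and the scalar together with the $\Omega$ parts fix $\omega$ and $\eta_0$. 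The third relation is handled identically by computing $qG_1W_1-q^{-1}W_1G_1$ in the same normal form and matching against its right-hand side; the structural parallel $(\rho_0,\gamma_0,\eta_0)\leftrightarrow(\rho_1,\gamma_1,\eta_1)$ reflects the $0\leftrightarrow 1$ symmetry built into~\eqref{W}.

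The only real obstacle is combinatorial bookkeeping: there are on the order of ten inequivalent PBW monomials once the Casimir is used, each coefficient is a polynomial of total degree at most two in the parameters $c_j,\oc_j,\epsilon_j,\mu_j$ with an additional linear term in $\Omega$, and asymmetric $q$-factors such as $qc_1\oc_0+q^{-1}c_0\oc_1$ (appearing in $g_6$ and in the $\eta_i$) must be tracked carefully. Organising the verification by the $q^{s_3}$-weight of monomials splits the check into several independent finite-dimensional linear identities, each one a short direct computation.
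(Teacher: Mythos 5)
Your proposal is correct and takes essentially the same route as the paper, whose proof consists of the single remark that the relations follow by straightforward computation using (\ref{Uqsl2}) and (\ref{Casimir}); your PBW normal-form reduction and coefficient matching is exactly that computation made explicit. One minor imprecision worth noting: the $S_-^3$ monomials cannot be removed by the Casimir (which only trades $S_+S_-$ for $\Omega$ plus Cartan terms) — they instead cancel identically within the $q$-commutator $qW_0G_1-q^{-1}G_1W_0$ because $S_-q^{s_3}$ $q$-commutes appropriately with $S_-^2$ — but this does not affect the validity of the argument.
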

\begin{proof} Straightforward, using (\ref{Uqsl2}) and (\ref{Casimir}).\end{proof}

In the text below, we consider successively special values of the structure constants of the AW algebra. In each case, a realization of $W_0,W_1,G_1$ in terms of $\mathfrak U_q(sl(2))$ Chevalley generators is given.  For a first choice of $c_i,\oc_i,\epsilon_i,\mu_i$ namely $\mu_0=c_0=\oc_1=0$ the operator $G_1$ will be denoted by $\tG_1$. It is diagonalized by the little q-Jacobi polynomials.
For a second choice of $c_i,\oc_i,\epsilon_i,\mu_i$, namely $\mu_0=c_0=0$ the operator $G_1$ is diagonalized by the big q-Jacobi polynomials.  In both cases, the elements $W_0,W_1,G_1$ satisfy the `reduced' relations:
\beqa 
\big[W_1,W_0\big]_q&=&G_1,\nonumber\\
\quad \big[W_0,G_1\big]_q &=&  \omega W_0 + \gamma_1 W_0^2 + \eta_0,  \nonumber\\
\big[G_1,W_1\big]_q &=& \rho_1 W_0 + \omega W_1 + \gamma_1 (W_0W_1+W_1W_0)  + \eta_1,  \nonumber
\eeqa
where the structure constants are specializations of $\rho_i,\omega,\gamma_i,\eta_i$ as given above.

\section{The little $q-$Jacobi polynomials}
We shall now make precise the circumstances under which the little q-Jacobi polynomials are eigenfunctions of $\tG_1$ and form a representation basis of a specialized Askey-Wilson algebra. Given the ensuing specific choice of parameters, the eigenfunctions of $W_0$ will be determined to be q-Pochammer symbols. It will be observed that $\tG_1$ acts bidiagonally on these functions, as well as on monomials, and this will entail explicit expressions for the little q-Jacobi polynomials.
\subsection{The second order q-difference operator $\tG_1$}
If we now compare the operator $G_1$ of (\ref{G1}) using (\ref{holo}) with (3.12.4) of \cite{KS}, one has:
\beqa
\tG_1\equiv G_1|_{g'_2=g'_3=g_1=0} &\mapsto& \underbrace{\left( \frac{g_3q^{-\nu}}{(q-q^{-1})}\frac{1}{z} + g_4q^{-2\nu} \right)}_{\tA_0} T_+^2 + \underbrace{\left( -\frac{g_2q^{\nu}}{(q-q^{-1})}\frac{1}{z} + g_5q^{2\nu} \right)}_{\tB_0} T_-^2  \label{tG1exp}\\
&& + \underbrace{\left(  -\frac{(g_3q^{-\nu} - g_2q^{\nu} ) }{(q-q^{-1})}\frac{1}{z} + g_6\right)}_{\tC_0}.\nonumber
\eeqa

\begin{prop}\label{p41} For the specialization\footnote{We choose $c_0=\oc_1=\mu_0=0$ in (\ref{W}). } $g'_2=g'_3=g_1=0$, the `reduced' operator $ \tG_1$ is diagonalized by the little $q-$Jacobi polynomials. 
\end{prop}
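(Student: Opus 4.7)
The plan is to verify the claim by explicit comparison: convert $\tG_1$ into a concrete second-order $q$-difference operator using the holomorphic realization (\ref{holo}) of $\mathfrak U_q(sl(2))$, and match the resulting operator, term by term, with the $q$-difference equation (3.12.4) of \cite{KS} satisfied by the little $q$-Jacobi polynomials. A key point is that $q^{\pm 2s_3}$ maps under (\ref{holo}) to $T_\pm^2$, shifting $z$ by $q^{\pm 2}$; consequently the object to recover is the little $q$-Jacobi operator in base $q^2$, whose polynomials are $p_n(z;a,b\mid q^2)$.

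Multiplying both sides of the eigenvalue problem $\tG_1 p_n = \Lambda_n p_n$ by $z$ clears the $1/z$ poles in $\tA_0,\tB_0,\tC_0$ of (\ref{tG1exp}) and leaves coefficients that are affine in $z$, which is the natural form of the little $q$-Jacobi equation. The matching then reduces to three affine identities: $z\tA_0$ should equal (a constant multiple of) $B(z)=a(bq^{2}z-1)$, the coefficient of $p_n(q^{2}z)$; $z\tB_0$ should equal (the same multiple of) $D(z)=z-1$, the coefficient of $p_n(q^{-2}z)$; and $z(\tC_0-\Lambda_n)$ should match $-\bigl(B(z)+D(z)+\lambda_n\bigr)z$, with $\lambda_n$ the eigenvalue prescribed in \cite{KS}. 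I would read off the slopes and intercepts to fix four combinations of the five specialized parameters $(c_1,\oc_0,\epsilon_0,\epsilon_1,\mu_1)$ in terms of $(a,b)$ and the representation label $\nu$, and then use $g_6$ -- which, through $\Omega$, carries the central value $\omega_\nu$ of (\ref{Casimir-val}) -- to close the diagonal match. The eigenvalue $\Lambda_n$ emerges at the end as a scalar multiple of $(1-q^{-2n})(1-abq^{2n+2})$.

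The main obstacle is bookkeeping rather than conceptual. The coefficients $g_i$ in (\ref{G1}) are bilinear combinations of the free structure constants, and there are normalisation freedoms (overall rescaling of $\tG_1$ and an additive constant shift via $g_6$) that must be resolved in a consistent order to match the conventions of (3.12.4) exactly. The cleanest route I foresee is to match the two $z$-linear slopes first to pin down ratios involving $(g_4,g_5)$, then the $z$-independent pieces to fix $(g_2,g_3)$ (and hence the $(a,b)$ identification), and only at the end invoke $g_6$ to absorb the diagonal constant and extract $\Lambda_n$.
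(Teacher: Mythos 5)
Your proposal is correct and follows essentially the same route as the paper: realize $\tG_1$ via (\ref{holo}) as a second-order $q$-difference operator in base $q^2$ and match its coefficients $\tA_0,\tB_0,\tC_0$ against the little $q$-Jacobi equation (3.12.4) of \cite{KS}, reading off the parameter identifications and the eigenvalue (with $g_6$ absorbing the additive constant). The only cosmetic difference is that you clear the $1/z$ poles by multiplying through by $z$, whereas the paper matches the $1/z$ and constant parts of each coefficient directly.
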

\begin{proof} Denote the little q-Jacobi polynomials as:
\beqa
y(z) \equiv p_n(z;\ma,\mb;q^2)\nonumber
\eeqa
They satisfy the second-order q-difference equation:
\beqa
q^{-2n}(1-q^{2n})(1-\ma\mb q^{2n+2})y(z) = B(z) y(q^2z) + \overline{B}(z)y(q^{-2}z) - (B(z)+\overline{B}(z))y(z),\nonumber
\eeqa
where
\beqa
B(z) &=& -\ma\frac{1}{z}+   \ma\mb q^2 ,\nonumber\\
\overline{B}(z)&=&-\frac{1}{z}+ 1,\nonumber
\eeqa
The exact relation between the parameters $g_i$ and the parameters entering in the little q-Jacobi polynomials is as follows:
\beqa
\tG_1 y(z) =\left( g_5q^{2\nu}(q^{-2n}+\ma\mb q^{2n+2}) +g_6\right)y(z)\nonumber
\eeqa
and 
\beqa
&&\ma = -\frac{g_3}{g_5}\frac{q^{-3\nu}}{(q-q^{-1})},\quad \mb= - \frac{g_4}{g_3}(q-q^{-1})q^{-\nu-2} \quad \mbox{and}\quad \frac{g_2}{g_5}=(q-q^{-1})q^\nu.\label{lab}
\eeqa
\end{proof}

\subsection{Eigenfunctions and the explicit expression of the little $q-$Jacobi polynomials}
First, we construct the eigenfunctions of $W_0$ in (\ref{W}) for $g'_2=g'_3=g_1=0$ . 
\begin{lem}\label{lem2}\label{lem1}  For $c_0=\mu_0=0$, one has:
\beqa
W_0f_n(z) =\lambda_n f_n (z) \quad \mbox{with} \quad \lambda_n=\epsilon_0 q^{2(n-\nu)} \quad \mbox{and}\quad f_n(z) = \left(\frac{\epsilon_0}{\oc_0}(1-q^2)q^{-\nu-1}z;q^2\right)_n.\label{f}
\eeqa
\end{lem}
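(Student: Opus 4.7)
The plan is to first simplify $W_0$ under the specialization $c_0=\mu_0=0$, then realize it as an explicit first-order $q$-difference operator via (\ref{holo}), and finally verify the eigenvalue equation by a direct calculation that exploits the product structure of the $q$-Pochhammer symbol.

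Setting $c_0=\mu_0=0$ in (\ref{W}) gives $W_0 = \oc_0\,S_-q^{s_3} + \epsilon_0\,q^{2s_3}$. Applying the realization (\ref{holo}), with $q^{s_3}\mapsto q^{-\nu}T_+$ and $S_-\mapsto -z^{-1}(T_- - T_+)/(q-q^{-1})$, and carefully composing the two factors in $S_-q^{s_3}$ (so that the inner $T_+$ shifts the argument of $T_-$), one obtains
$$W_0 \;=\; \frac{\oc_0\,q^{-\nu}}{z(q-q^{-1})}\bigl(T_+^2 - \mathrm{Id}\bigr) \;+\; \epsilon_0\,q^{-2\nu}\,T_+^2,$$
acting on formal power series in $z$, where $T_+^2 f(z)=f(q^2 z)$.

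Introduce $\alpha=\frac{\epsilon_0}{\oc_0}(1-q^2)q^{-\nu-1}$, so that $f_n(z)=(\alpha z;q^2)_n=\prod_{k=0}^{n-1}(1-\alpha z q^{2k})$. Using the product formula, one finds the telescoping identities
$$f_n(q^2z) \;=\; (1-\alpha z q^{2n})\prod_{k=1}^{n-1}(1-\alpha z q^{2k}), \qquad f_n(q^2z)-f_n(z) \;=\; \alpha z\,(1-q^{2n})\prod_{k=1}^{n-1}(1-\alpha z q^{2k}).$$
Substituting these into $W_0 f_n(z)$ and factoring out the common product $\prod_{k=1}^{n-1}(1-\alpha z q^{2k})$ reduces the claim to a single scalar identity.

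The one algebraic point to verify is the cancellation $\dfrac{\oc_0 q^{-\nu}\alpha}{q-q^{-1}} = -\epsilon_0 q^{-2\nu}$, which follows at once from the definition of $\alpha$ since $(1-q^2)/(q-q^{-1}) = -q$. Using this, the bracketed scalar collapses to $\epsilon_0 q^{-2\nu}\bigl[-(1-q^{2n})+(1-\alpha z q^{2n})\bigr] = \epsilon_0 q^{2(n-\nu)}(1-\alpha z)$, which restores the missing $k=0$ factor of the Pochhammer product and yields $W_0 f_n(z)=\epsilon_0 q^{2(n-\nu)} f_n(z)$, establishing both the eigenvalue $\lambda_n$ and the eigenfunction $f_n$. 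The calculation is routine; the only real care needed is in the order of composition of $S_-$ with $q^{s_3}$ when passing to the $q$-difference realization, and in bookkeeping the powers of $q$.
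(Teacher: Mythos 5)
Your proof is correct, but it proceeds differently from the paper's. The paper \emph{derives} $f_n$: it first computes the bidiagonal action $W_0 z^s=\lambda_s z^s+\nu_s z^{s-1}$ on monomials, then writes $f_n(z)=\sum_s \alpha_{n,s}z^s$, solves the resulting two-term recurrence for the coefficients $\alpha_{n,s}$ in closed form, and finally resums the series into the $q$-Pochhammer symbol via the $q$-binomial theorem. You instead take the claimed product form $f_n(z)=(\alpha z;q^2)_n$ as given and \emph{verify} the eigenvalue equation directly, using the telescoping of the finite product under $z\mapsto q^2 z$ and the single scalar cancellation $\oc_0 q^{-\nu}\alpha/(q-q^{-1})=-\epsilon_0 q^{-2\nu}$. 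Your route is shorter and avoids the $q$-binomial theorem entirely, at the cost of not explaining where $f_n$ comes from; the paper's route is constructive and, as a bonus, establishes the same recurrence machinery that is reused verbatim in Proposition 3.2 for the overlap coefficients $\gamma_{n,s}$. Your intermediate realization $W_0=\frac{\oc_0 q^{-\nu}}{z(q-q^{-1})}(T_+^2-\mathrm{Id})+\epsilon_0 q^{-2\nu}T_+^2$ and the order of composition in $S_-q^{s_3}$ are handled correctly, and the resulting action on monomials agrees with the coefficients $\lambda_n,\nu_n$ recorded in the paper.
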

\begin{proof} Considering (\ref{W}) for $c_0=\mu_0=0$ and using (\ref{holo}), the action on the monomials $z^n$ reads:
\beqa
W_0 z^n= \lambda_n  z^n + \nu_n z^{n-1}\quad \mbox{with}\quad \lambda_n=\epsilon_0 q^{2(n-\nu)}\quad  \mbox{and} \quad \nu_n= \oc_0 q^{-\nu+1}\frac{1-q^{2n}}{1-q^2}.\nonumber
\eeqa
Let $f_n(z)$ be such that $W_0f_n(z)=\lambda_nf_n(z)$.  Define $f_n(z)=\sum_{s=0}^\infty \alpha_{n,s} z^s$. The action of $W_0$ on $f_n(z)$ gives the recurrence relation:
\beqa
\lambda_n \alpha_{n,s} = \lambda_s\alpha_{n,s} + \nu_{s+1}\alpha_{n,s+1}. \nonumber
\eeqa
The solution reads
\beqa
\alpha_{n,s}&=&\alpha_{n,0} \frac{(\lambda_n-\lambda_0)(\lambda_n-\lambda_1)\cdots (\lambda_n-\lambda_{s-1})}{\nu_1\nu_2 \cdots \nu_{s}}\\
&=& \alpha_{n,0}  \left(  (1-q^2)\frac{\epsilon_0}{\oc_0}q^{2n-\nu-1}\right)^s \frac{(q^{-2n};q^2)_s}{(q^2;q^2)_s},\nonumber
\eeqa
from which, setting $\alpha_{n,0}=1$  and using the $q-$binomial theorem \cite{KS}, we finally obtain (\ref{f}).
\end{proof}

On the eigenfunctions of $W_0$, we now consider the action of $\tG_1$.
\begin{lem} One has:
\beqa
\tG_1 f_n(z) = a_n f_n(z) + b_n f_{n-1}(z)\label{acttG1}
\eeqa
with
\beqa
a_n&=& g_5q^{2\nu}(q^{-2n}+\ma\mb q^{2n+2}) +g_6,\nonumber\\
b_n&=&  -g_5 q^{2\nu}q^{-2n}(1-q^{2n})(1-q^{2n}\mb).\nonumber
\eeqa
\end{lem}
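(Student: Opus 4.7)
The plan is to show that $\Phi_n(z) := \tG_1 f_n(z) - a_n f_n(z)$ is a polynomial of degree at most $n-1$ that vanishes at all $n-1$ roots of $f_{n-1}$, hence is a scalar multiple of $f_{n-1}$, and then to identify that scalar by a coefficient match.

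First I verify that $\tG_1$ preserves the space of polynomials. From (\ref{tG1exp}), each of $\tA_0,\tB_0,\tC_0$ carries a $1/z$ singularity, but the coefficients of $1/z$ in $\tA_0,\tB_0,\tC_0$ sum to zero while $f(q^{\pm 2}\cdot 0)=f(0)$ for any $f$, so the residues cancel at $z=0$. A short computation on the monomial basis yields $\tG_1 z^s = a_s z^s + c_s z^{s-1}$, matching the eigenvalue $a_s$ identified in the proof of \propref{p41}. In particular the coefficient of $z^n$ in $\tG_1 f_n$ is $a_n$ times the leading coefficient of $f_n$, so $\Phi_n$ has degree at most $n-1$.

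Next I show that $\Phi_n$ vanishes at the $n-1$ zeros $z_k := (\alpha q^{2k})^{-1}$ of $f_{n-1}$, where $\alpha := (\epsilon_0/\oc_0)(1-q^2)q^{-\nu-1}$. From the product form $f_n(w)=\prod_{j=0}^{n-1}(1-\alpha q^{2j}w)$ one reads off that $f_n(z_k)=0$ for $k\in\{0,\ldots,n-1\}$, $f_n(q^{2}z_k)=0$ for $k\in\{1,\ldots,n\}$, and $f_n(q^{-2}z_k)=0$ for $k\in\{0,\ldots,n-2\}$. Hence for the interior values $k=1,\ldots,n-2$ all three evaluations vanish and $\Phi_n(z_k)=0$ automatically. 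The critical case is $k=0$: there $f_n(z_0)=f_n(q^{-2}z_0)=0$ but $f_n(q^2 z_0)=(q^2;q^2)_n\neq 0$, so $\Phi_n(z_0)=\tA_0(z_0)\,f_n(q^2 z_0)$ and the vanishing reduces to $\tA_0(1/\alpha)=0$. Writing $\alpha=-(\epsilon_0/\oc_0)(q-q^{-1})q^{-\nu}$ via $1-q^2=-q(q-q^{-1})$, this becomes the identity $g_4\,\oc_0=g_3\,\epsilon_0$. Under the specialization $c_0=\oc_1=\mu_0=0$ fixed by \propref{p41}, the formulas of \secref{24} give $g_3=\mu_1\oc_0(q-q^{-1})$ and $g_4=\mu_1\epsilon_0(q-q^{-1})$, which satisfy this identity. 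This is the only step where the specialization enters essentially, and it is the main obstacle of the proof.

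Having exhibited $n-1$ zeros of the polynomial $\Phi_n$ of degree at most $n-1$, we conclude $\Phi_n=b_n f_{n-1}$ for some constant $b_n$. The value of $b_n$ is then pinned down by matching the coefficient of $z^{n-1}$ on both sides: using the $q$-binomial expansion of $f_n$ together with the monomial formula $\tG_1 z^s = a_s z^s + c_s z^{s-1}$, and applying (\ref{lab}) to rewrite $g_3,g_4$ in terms of $g_5,\ma,\mb$, one recovers the stated value $b_n=-g_5 q^{2\nu}q^{-2n}(1-q^{2n})(1-q^{2n}\mb)$.
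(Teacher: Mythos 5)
Your proof is correct, and it takes a genuinely different route from the paper's. The paper divides out the common factor $(1-\gamma_0 q^2 z)\cdots(1-\gamma_0 q^{2n-4}z)$ shared by $f_n(z)$, $f_n(q^{\pm 2}z)$ and $f_{n-1}(z)$, reducing (\ref{acttG1}) to an identity between low-degree Laurent polynomials in $z$ whose coefficients determine $a_n$ and $b_n$ simultaneously and uniquely. You instead run a degree-and-zeros argument: having checked on the monomial basis that $\tG_1$ is lower bidiagonal with diagonal entries $a_s$ (so that $\Phi_n=\tG_1 f_n-a_nf_n$ has degree at most $n-1$), you show $\Phi_n$ vanishes at the $n-1$ roots of $f_{n-1}$ and conclude $\Phi_n=b_nf_{n-1}$, fixing $b_n$ by one coefficient. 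The two approaches trade off as follows: the paper's is a single mechanical coefficient match that produces both $a_n$ and $b_n$ at once; yours isolates where the specialization of Proposition \ref{p41} actually matters — the boundary evaluation at $z_0=1/\gamma_0$, which reduces to $\tA_0(1/\gamma_0)=0$, i.e.\ $g_4\oc_0=g_3\epsilon_0$, an identity I have checked does hold when $c_0=\oc_1=\mu_0=0$ (both sides equal $\mu_1\epsilon_0\oc_0(q-q^{-1})$); equivalently $\gamma_0=q^2\mb$, which is also what makes your final coefficient match at $z^{n-1}$ collapse to the stated $b_n$. Your argument thus explains structurally why $\tG_1$ is bidiagonal rather than tridiagonal on the $f_n$. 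Two minor points you should make explicit: the roots $z_0,\dots,z_{n-2}$ are pairwise distinct because $q$ is not a root of unity, and the argument tacitly assumes $\gamma_0\neq 0$ (i.e.\ $\epsilon_0\neq 0$), the same genericity the paper needs for $\{f_n\}$ to be a nondegenerate basis.
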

\begin{proof} Recall (\ref{tG1exp}). The l.h.s. of (\ref{acttG1}) reads:
\beqa
\tG_1 f_n(z) &=& \left(\tA_0 (1-\gamma_0 q^{2n-2}z)(1-\gamma_0 q^{2n}z) + \tB_0 (1-\gamma_0 q^{-2}z)(1-\gamma_0 z) \right.\nonumber\\
&& \left.+ \tC_0 (1-\gamma_0 z)(1-\gamma_0 q^{2n-2}z)\right)(1-\gamma_0 q^{2}z)\cdots (1-\gamma_0 q^{2n-4}z),\nonumber
\eeqa
where we have denoted  $\gamma_0=\frac{\epsilon_0}{\oc_0}(1-q^2)q^{-\nu-1}$. The r.h.s reads:
\beqa
\left(a_n (1-\gamma_0 z)(1-\gamma_0 q^{2n-2}z) + b_n(1-\gamma_0 z)\right)(1-\gamma_0 q^{2}z)\cdots (1-\gamma_0 q^{2n-4}z).\nonumber
\eeqa 
Equating both sides of the equation (\ref{actG1}), the coefficients $a_n$ and $b_n$ are determined uniquely. 
\end{proof}
\begin{rem} Observe that the coefficient $a_n$ coincides with the spectrum of $\tG_1$, as expected ($\tG_1$ is lower triangular in the basis $\{f_n\}$).
\end{rem}

Next, we are interested in the overlap coefficients between the eigenfunctions of $W_0$ (the $f_n(z)$) and the eigenfunctions of $\tG_1$ (the little q-Jacobi polynomials).
\begin{prop}\label{prop2} One has:
\beqa
p_n(z;\ma,\mb;q^2)= \sum_{s=0}^{\infty} \gamma_{n,s}f_s(z) \label{exppn}
\eeqa
where
\beqa
\gamma_{n,s}=   (-q^2 \mb)^{-n} q^{-n(n-1)} \frac{(q^2\mb;q^2)_n}{(q^2\ma;q^2)_n} \frac{ (q^{-2n};q^2)_s (\ma\mb q^{2n+2};q^2)_s}{(q^2\mb;q^2)_s (q^2;q^2)_s}q^{2s} .\label{gam}
\eeqa
\end{prop}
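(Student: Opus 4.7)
The natural strategy is to expand $p_n(z;\ma,\mb;q^2)$ in the basis of $W_0$-eigenfunctions $\{f_s\}$ supplied by \lemref{lem1}, and to extract the coefficients by playing off the two commuting/non-commuting operators $W_0$ and $\tG_1$ against each other. Setting
\[
p_n(z;\ma,\mb;q^2) = \sum_{s\ge 0} \gamma_{n,s}\, f_s(z),
\]
I would apply $\tG_1$ to both sides. The left-hand side becomes $a_n p_n$ by \propref{p41}, while the right-hand side becomes $\sum_s \gamma_{n,s}(a_s f_s + b_s f_{s-1})$ by the preceding lemma. Equating coefficients of $f_s$ (noting $b_0=0$ so no boundary term appears) gives the two-term recurrence
\[
(a_n - a_s)\,\gamma_{n,s} = b_{s+1}\,\gamma_{n,s+1},
\]
whose solution is $\gamma_{n,s} = \gamma_{n,0}\prod_{k=0}^{s-1}(a_n-a_k)/b_{k+1}$.

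Next I would evaluate this telescoping product in closed form. Using the explicit expressions for $a_n$ and $b_n$ from the preceding lemma, one checks the factorization
\[
a_n - a_k = g_5\, q^{2\nu - 2(n+k)}(q^{2k}-q^{2n})\,(1-\ma\mb\, q^{2(n+k+1)}),
\]
while $b_{k+1} = -g_5\, q^{2\nu - 2(k+1)}(1-q^{2(k+1)})(1-q^{2(k+1)}\mb)$. Telescoping, the factors $(q^{2k}-q^{2n})$ and $(1-\ma\mb q^{2(n+k+1)})$ assemble into $(-q^{2n})^s(q^{-2n};q^2)_s$ and $(\ma\mb q^{2n+2};q^2)_s$ respectively, while the denominator produces $(q^2;q^2)_s(q^2\mb;q^2)_s$. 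After collecting all residual $q$-powers (which combine into the single factor $q^{2s}$), one obtains
\[
\gamma_{n,s} = \gamma_{n,0}\, q^{2s}\, \frac{(q^{-2n};q^2)_s\,(\ma\mb\, q^{2n+2};q^2)_s}{(q^2;q^2)_s\,(q^2\mb;q^2)_s}.
\]
Since $(q^{-2n};q^2)_s$ vanishes for $s>n$, the a priori infinite expansion automatically truncates at $s=n$, as it must given that $p_n$ has degree $n$ and $\deg f_s = s$.

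It remains to pin down $\gamma_{n,0}$. Since $f_s(z) = (\gamma_0 z;q^2)_s$ with $\gamma_0 = \frac{\epsilon_0}{\oc_0}(1-q^2)q^{-\nu-1}$ is a polynomial of degree $s$, only the term $s=n$ contributes to the coefficient of $z^n$ on the right-hand side, that contribution being $\gamma_{n,n}(-\gamma_0)^n q^{n(n-1)}$. Matching this with the coefficient of $z^n$ read off from the standard ${}_2\phi_1$ representation of $p_n(z;\ma,\mb;q^2)$, and invoking the identity $\gamma_0 = q^2\mb$ --- a direct consequence of the parameter relations \er{lab} once specialized to $c_0=\oc_1=\mu_0=0$, which forces $g_3=\mu_1\oc_0(q-q^{-1})$ and $g_4=\mu_1\epsilon_0(q-q^{-1})$ --- yields precisely
\[
\gamma_{n,0} = (-q^2\mb)^{-n}\, q^{-n(n-1)}\, \frac{(q^2\mb;q^2)_n}{(q^2\ma;q^2)_n},
\]
establishing \er{gam}. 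The main obstacle is the careful $q$-algebraic bookkeeping needed to reduce the telescoping product to the stated $q$-Pochhammer form and to verify the pivotal compatibility $\gamma_0 = q^2\mb$ linking the realization parameters to the polynomial parameters; the rest is structural.
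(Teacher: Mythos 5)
Your proposal is correct and follows essentially the same route as the paper: expand $p_n$ over the $W_0$-eigenbasis $\{f_s\}$, use the bidiagonal action of $\tG_1$ to get the two-term recurrence $(a_n-a_s)\gamma_{n,s}=b_{s+1}\gamma_{n,s+1}$, and evaluate the telescoping product in $q$-Pochhammer form. The one place you go beyond the paper is the normalization: the paper simply \emph{sets} $\gamma_{n,0}$ to the stated value (justified only a posteriori by the match with Koornwinder's formula in the following Remark), whereas you actually derive it by comparing the coefficient of $z^n$ via the truncation at $s=n$ and the identity $\gamma_0=q^2\mb$, which indeed follows from \er{lab} under the specialization $c_0=\oc_1=\mu_0=0$.
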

\begin{proof} By Prop. \ref{p41} recall that $p_n(z;\ma,\mb;q^2)$ are eigenfunctions of $\tG_1$ with the identification (\ref{lab}).  Consider the expansion (\ref{exppn}).
The action of $\tG_1$ on $p_n(z;\ma,\mb;q^2)$ gives the recurrence relation:
\beqa
a_n \gamma_{n,s} = a_s\gamma_{n,s} + b_{s+1}\gamma_{n,s+1}. \nonumber
\eeqa
The solution reads
\beqa
\gamma_{n,s}&=&\gamma_{n,0} \frac{(a_n-a_0)(a_n-a_1)\cdots (a_n-a_{s-1})}{b_1b_2 \cdots b_{s}}.\nonumber
\eeqa
By straightforward calculations, one finds:
\beqa
(a_n-a_0)(a_n-a_1)\cdots (a_n-a_{s-1}) &=& (-g_5q^{2\nu})^s q^{-s(s-1)} (q^{-2n};q^2)_s (\ma\mb q^{2n+2};q^2)_s,\nonumber\\
b_1b_2\cdots b_s &=&  (-g_5q^{2\nu})^sq^{-s(s+1)}(q^2\mb;q^2)_s (q^2;q^2)_s.\nonumber
\eeqa
Setting the normalization such that:
\beqa
 \gamma_{n,0}= (-q^2 \mb)^{-n} q^{-n(n-1)} \frac{(q^2\mb;q^2)_n}{(q^2\ma;q^2)_n},
\eeqa
we get (\ref{gam}).
\end{proof}

\begin{rem} The expansion formula (\ref{exppn}) coincides with (2.46) of \cite{Kor} with $q \rightarrow q^2$:
\beqa
p_n(z;\ma,\mb;q^2) =  (-q^2 \mb)^{-n} q^{-n(n-1)} \frac{(q^2\mb;q^2)_n}{(q^2\ma;q^2)_n} \    _3\phi_2\left[\begin{matrix} q^{-n} , q^{n+1}\ma\mb, q^2\mb z \\
q^2\mb, 0 \end{matrix}\,; q^2,q^2\right].
\eeqa
\end{rem}
\begin{rem} Note that the little q-Jabobi operator $\tG_1$ is also bidiagonal in the monomial basis $z^n$.  By analogy with the proof of Lemma \ref{lem2} one can retrieve the following familar explicit expression for the little q-Jacobi polynomial \cite[eq. (3.12.1)]{KS} (see also \cite[eq. (2.45)]{Kor}):
\beqa
p_n(z;\ma,\mb;q^2) = \   _2\phi_1(q^{-n} , q^{n+1}\ma\mb, q^2\ma ; q^2;q^2 z).
\eeqa
\end{rem}

\vspace{2mm}

\section{Tridiagonalization of the little $q-$Jacobi operator and the equitable presentation of $\mathfrak U_q(sl(2))$}
Calling upon the equitable embedding of the Askey-Wilson algebra \cite{T} which will be recalled next, we shall now make the observation that the little q-Jacobi operator and a tridiagonalization of this operator realizes also the Askey-Wilson algebra.
\begin{rem} Define
\beqa
g_2=(1-q^{-2})q^{1/2}b^{-1},\quad g_3=-q^{-3/2}(q-q^{-1})ca^{-1},\quad g_4=g_5^{-1}=b, \quad g_6=0.
\eeqa
Then
\beqa
\tG_1&=& (1-q^{-2})q^{1/2}b^{-1}  S_-q^{-s_3} -  q^{-3/2}(q-q^{-1})ca^{-1}  S_-q^{s_3} + b q^{2s_3} + b^{-1}q^{-2s_3}\label{B}\\
 &=& bY + b^{-1}Z +qca^{-1}(1-YZ)\ ,\nonumber
\eeqa
where $Y$ and $Z$ are the equitable $\mathfrak U_q(sl(2))$ generators introduced in (\ref{XYZ}).

\end{rem}

Using the equitable presentation of $\mathfrak U_q(sl(2))$  (\ref{XYZ}), by Proposition 1.1 and Lemma 3.4 in \cite{T}, one gets:
\begin{prop} Define
\beqa
A&=& aX + a^{-1}Y + qbc^{-1}(1-XY),\label{ABC}\\
B&=&  bY + b^{-1}Z +qca^{-1}(1-YZ),\nonumber\\
C&=& cZ + c^{-1}X + qab^{-1}(1-ZX).\nonumber
\eeqa
The generators $A,B,C$ satisfy the Askey-Wilson algebra:
\beqa
A + \frac{qBC-q^{-1}CB}{q^2-q^{-2}} &=& \frac{\Lambda(a+a^{-1}) + (b+b^{-1})(c+c^{-1})}{q+q^{-1}}\ ,\nonumber\\
B + \frac{qCA-q^{-1}AC}{q^2-q^{-2}} &=& \frac{\Lambda(b+b^{-1}) + (c+c^{-1})(a+a^{-1})}{q+q^{-1}}\ ,\nonumber\\
C + \frac{qAB-q^{-1}BA}{q^2-q^{-2}} &=& \frac{\Lambda(c+c^{-1}) + (a+a^{-1})(b+b^{-1})}{q+q^{-1}}\ ,\nonumber
\eeqa
where $\Lambda$ denotes the `normalized' Casimir element
\beqa
\Lambda=(q-q^{-1})^2\Omega.\nonumber
\eeqa
\end{prop}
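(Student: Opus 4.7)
The plan is to recognize that $(A,B,C)$ are precisely the generators of the Askey-Wilson algebra in the form treated by Terwilliger, so the statement amounts to invoking [T, Prop.~1.1 and Lem.~3.4] after matching conventions. For a self-contained derivation one would verify the three relations directly from the equitable defining relations (\ref{equit}), exploiting the cyclic structure of the problem.

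First, I would observe that the equitable relations (\ref{equit}) together with the fact that each of $X,Y,Z$ is invertible in $\mathfrak U_q(sl(2))$ are invariant under the cyclic permutation $\sigma:(X,Y,Z)\mapsto(Y,Z,X)$. The definitions (\ref{ABC}) are arranged so that the composite map sending $(X,Y,Z,a,b,c)$ to $(Y,Z,X,b,c,a)$ cyclically permutes $(A,B,C)$. Hence the three relations in the statement are permuted into one another by $\sigma$, and it suffices to verify a single one, for instance
\begin{equation*}
C + \frac{qAB - q^{-1}BA}{q^2 - q^{-2}} = \frac{\Lambda(c+c^{-1}) + (a+a^{-1})(b+b^{-1})}{q+q^{-1}}.
\end{equation*}

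To establish this identity, I would expand $AB$ and $BA$ from (\ref{ABC}), obtaining nine monomials in each product, and normally order them using (\ref{equit}). The resulting expression for $qAB - q^{-1}BA$ is a polynomial in $X,Y,Z$ and products thereof with coefficients depending on $a,b,c$ and $q$. Dividing by $q^2-q^{-2}$ and adding $C$, the non-central monomials must cancel exactly against those produced by the Casimir contribution $\Lambda(c+c^{-1})/(q+q^{-1})$ on the right-hand side. This relies on the equitable form of $\Omega$, which is a symmetric polynomial in $X,Y,Z$ and $XYZ$, so that $\Lambda=(q-q^{-1})^2\Omega$ absorbs precisely the required non-scalar part.

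The main obstacle is the bookkeeping: each of $AB$, $BA$ has nine terms, several of which require repeated use of (\ref{equit}) to be brought into a common normal form, and the coefficients of the resulting monomials must match the right-hand side exactly. The $\mathbb{Z}/3$ cyclic symmetry reduces the work by a factor of three, and the cleanest route is simply to cite [T], with the verification restricted to checking that our parameter conventions and normalization of the Casimir match those of that reference.
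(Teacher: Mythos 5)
Your proposal is correct and takes essentially the same route as the paper, which offers no independent proof but simply derives the proposition from Proposition~1.1 and Lemma~3.4 of \cite{T} via the equitable presentation (\ref{XYZ}); your primary plan of citing \cite{T} after matching conventions is exactly what is done. The additional sketch of a direct verification (using the $\mathbb{Z}/3$ cyclic symmetry of (\ref{equit}) and of the normalized Casimir element to reduce to a single relation) is sound and goes beyond what the paper records, but it is not needed to match the paper's argument.
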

\begin{rem} In terms of the Chevalley generators of $\mathfrak U_q(sl(2))$, one has $B=\tG_1$ with (\ref{B}) and
\beqa
A&=&  -a(q-q^{-1})q^{1/2}S_+q^{-s_3} + bc^{-1}(q^2-1)q^{1/2} S_+q^{s_3} +   a^{-1}q^{2s_3} +  aq^{-2s_3} ,\label{A}\\
C&=& (c+c^{-1})q^{-2s_3}  - ab^{-1}(q+q^{-1})q^{-4s_3} + ab^{-1}(q-q^{-1})^2\Omega q^{-2s_3} \nonumber\\
&& + (1-q^{-2}) \left(S_-(cq^{1/2} q^{-s_3} -q^{3/2}ab^{-1}q^{-3s_3})  - S_+(c^{-1}q^{3/2}q^{-s_3} -ab^{-1}q^{1/2}q^{-3s_3})\right).\nonumber
\eeqa
\end{rem}

For the analysis to follow, let us introduce the operators $O$ and $\bO$:
 \beqa
&& O = (q-q^{-1}) q^{-\nu} S_+ q^{s_3}(1-q^{-2\nu}q^{2s_3})^{-1}, \quad \bO = (q-q^{-1}) q^{-\nu-1} q^{-s_3}S_-(1-q^{-2\nu}q^{-2s_3})^{-1},\label{ObO}
\eeqa  
where $(1-q^{-2\nu}q^{\pm 2s_3})^{-1}$ is understood as a power series in the Cartan element of $\mathfrak U_q(sl(2))$. Note that
$O\bO=\bO O=1$. Also,  the action of each operator  on the space of monomials in $z$, according to (\ref{holo}), is such that:
\beqa
O \mapsto z, \quad \bO \mapsto z^{-1}.
\eeqa

\begin{lem}\label{ABz} Consider $A,B$ as the $q-$difference operators obtained from (\ref{ABC}) with (\ref{XYZ}). One has:
\beqa
A \equiv \alpha O B + \beta B O + \gamma O  + \delta ,\label{Texp}
\eeqa
with
\beqa
\alpha&=& -\frac{q(q^{3/2+\nu}ab-q^{-3/2-\nu}c^{-1})}{q^2-q^{-2}} ,\quad \beta= \frac{(q^{1/2+\nu}ab-q^{3/2-\nu}c^{-1})}{q^2-q^{-2}} ,\label{sol1}\\
 \gamma &=&  bc^{-1}q^{3/2+\nu} +aq^{1/2-\nu},\nonumber\\
 \delta&=&  \frac{(q^{2\nu+1}a+ q^{-2\nu-1}a^{-1}  +bc + b^{-1}c^{-1})}{q+q^{-1}}. \nonumber
\eeqa
\end{lem}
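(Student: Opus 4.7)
The plan is to verify (\ref{Texp}) by computing both sides as $q$-difference operators in $z$ under the realization (\ref{holo}) and matching coefficients. Using (\ref{XYZ}) and (\ref{holo}), both $A$ and $B$ can be expanded in the common form (coefficient)$T_+^2$ + (coefficient)$T_-^2$ + (coefficient), where the coefficients are multiplication operators by rational functions of $z$. For $B$ this is exactly (\ref{tG1exp}) with the specialized $g_i$ listed in the Remark preceding the lemma: $B \mapsto \tA_0\, T_+^2 + \tB_0\, T_-^2 + \tC_0$, with $\tA_0,\tB_0$ affine in $z^{-1}$ and $\tC_0$ proportional to $z^{-1}$. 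A parallel term-by-term computation starting from the Chevalley expression (\ref{A}) produces $A \mapsto \mathcal X\, T_+^2 + \mathcal Y\, T_-^2 + \mathcal Z$, where $\mathcal X, \mathcal Y$ are affine in $z$ and $\mathcal Z$ is proportional to $z$ (there is no $z^{-1}$ contribution since $A$ contains only $q^{\pm 2s_3}$ and $S_+ q^{\pm s_3}$).

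Next, the right-hand side of (\ref{Texp}) is rewritten in the same form. Since $O$ is realized by multiplication by $z$ and the shift rules give $T_\pm^2\, z = q^{\pm 2}\, z\, T_\pm^2$, the operator $BO$ is obtained from $OB$ by the substitutions $T_+^2 \mapsto q^2\, T_+^2$ and $T_-^2 \mapsto q^{-2}\, T_-^2$. Consequently
\[
\alpha OB + \beta BO + \gamma O + \delta \;\mapsto\; (\alpha + \beta q^2)\, z\tA_0\, T_+^2 + (\alpha + \beta q^{-2})\, z\tB_0\, T_-^2 + (\alpha+\beta)\, z\tC_0 + \gamma z + \delta.
\]
Equating the $T_+^2$ blocks $\mathcal X = (\alpha + \beta q^2)\, z\tA_0$ yields two scalar conditions (from the constant and $z$-linear parts) on the single unknown $\alpha + \beta q^2$, and similarly the $T_-^2$ block determines $\alpha + \beta q^{-2}$. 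Inverting these two linear equations in $\alpha,\beta$ gives the formulas for $\alpha$ and $\beta$ stated in (\ref{sol1}). Finally, since $z\tC_0$ is a constant, the identity-shift part $(\alpha+\beta)\, z\tC_0 + \gamma z + \delta = \mathcal Z$ is affine in $z$; its $z$-linear coefficient fixes $\gamma$ and its constant part fixes $\delta$, reproducing the remaining two lines of (\ref{sol1}).

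The main obstacle is the consistency check that the two scalar equations extracted from each of the $T_+^2$ and $T_-^2$ blocks admit a common solution — equivalently, that the ratio of the constant to the $z$-linear parts of $\mathcal X$ (resp. $\mathcal Y$) matches the corresponding ratio in $z\tA_0$ (resp. $z\tB_0$). This compatibility is the non-trivial structural content of the lemma, reflecting the fact that $A$ can be reconstructed from the specialized little $q$-Jacobi operator $B$ together with the multiplication operator $O$; once confirmed by direct arithmetic, the remaining identifications are purely algebraic and reproduce (\ref{sol1}).
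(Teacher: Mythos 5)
Your proposal is correct, and it takes a route that differs from the paper's in its choice of decomposition, though the overall strategy (match coefficients of both sides in a fixed operator basis and solve the resulting overdetermined linear system for $\alpha,\beta,\gamma,\delta$) is the same. You work entirely inside the $q$-difference realization: everything is written as $(\cdot)T_+^2+(\cdot)T_-^2+(\cdot)$ with coefficients that are Laurent polynomials in $z$, $O$ acts as multiplication by $z$, and the only input beyond (\ref{holo}) is the elementary shift rule $T_\pm^2 z=q^{\pm2}zT_\pm^2$. The paper instead stays at the level of $\mathfrak U_q(sl(2))$: it computes from (\ref{ObO}) the products of $O$ with the monomials $S_-q^{\pm s_3}$, $q^{\pm 2s_3}$, expands $\alpha OB+\beta BO+\gamma O+\delta$ in the monomials $S_+q^{\pm s_3},q^{\pm 2s_3},O,1$, and compares with (\ref{A}). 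The two bookkeepings are equivalent under (\ref{holo}), and the ``nontrivial consistency'' you correctly isolate (two scalar equations per shift block for the single unknown $\alpha+\beta q^{\pm2}$) appears in the paper's version as the coincidence of the constraint coming from the $S_+q^{s_3}$ coefficient with the one coming from $q^{2s_3}$ (and likewise for $S_+q^{-s_3}$ versus $q^{-2s_3}$); one checks directly, e.g., that both members of the first pair give $\alpha+\beta q^{2}=-c^{-1}q^{3/2-\nu}$ and both members of the second give $\alpha+\beta q^{-2}=-ab\,q^{1/2+\nu}$, after which inverting the $2\times2$ system reproduces (\ref{sol1}). Your version is more elementary and matches the lemma's phrasing (``as $q$-difference operators''), at the cost of being tied to the realization; the paper's version is realization-independent (modulo the completion needed to define $O$) and recycles the same commutation relations for the inverse statement in the next lemma. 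Note that, like the paper's ``by straightforward calculations,'' you defer the final arithmetic; carrying it out as above closes the argument.
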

\begin{proof}
Recall that $B=\tG_1$ is given by (\ref{B}). Consider the r.h.s. of (\ref{Texp}). The action of $O$ on the monomials $S_-q^{\pm s_3},q^{\pm 2s_3}$ is such that:
\beqa
OS_-q^{-s_3} &=& \frac{q^{\nu} -q^{-\nu}q^{-2s_3}}{q-q^{-1}}, \quad  OS_-q^{s_3} = \frac{-q^{-\nu} +q^{\nu}q^{2s_3}}{q-q^{-1}},\nonumber\\
S_-q^{-s_3}O &=& \frac{q^{\nu} -q^{-\nu-2}q^{-2s_3}}{q-q^{-1}}, \quad  S_-q^{s_3}O = \frac{-q^{-\nu} +q^{\nu+2}q^{2s_3}}{q-q^{-1}},\nonumber\\
q^{2s_3}O &=& -(q-q^{-1})q^{\nu+2}S_+q^{s_3} +q^{2\nu+2}O,\quad q^{-2s_3}O = (q-q^{-1})q^{-\nu-2}S_+q^{-s_3} +q^{-2\nu-2}O.\nonumber
\eeqa
 By straightforward calculations, we obtain:
\beqa
\alpha OB + \beta BO + \gamma O  + \delta &=& -b(q-q^{-1})q^\nu(\alpha+\beta q^2)S_+q^{s_3} + b^{-1}(q-q^{-1})q^{-\nu}(\alpha+\beta q^{-2})S_+q^{-s_3} \nonumber\\
&& -  ca^{-1}q^{-3/2}q^\nu(\alpha+\beta q^2)q^{2s_3} -   b^{-1}q^{-1/2}q^{-\nu}(\alpha+\beta q^{-2})q^{-2s_3} \nonumber\\
&&   + \left( \alpha(bq^{2\nu} + b^{-1}q^{-2\nu}) + \beta (bq^{2\nu+2} + b^{-1}q^{-2\nu-2}) + \gamma \right)O\nonumber\\
&&  + \left(b^{-1}q^{\nu-1/2} + ca^{-1}q^{-\nu-3/2}\right)(\alpha+\beta) + \delta.\nonumber
\eeqa
Compare the expression above with (\ref{A}), we obtain the constraints:
\beqa
&& a=-b^{-1}q^{-1/2-\nu}(\alpha+\beta q^{-2}), \quad c = -\frac{q^{-\nu+3/2}}{(\alpha+\beta q^{2})}, \nonumber\\
&&  \alpha(bq^{2\nu} + b^{-1}q^{-2\nu}) + \beta (bq^{2\nu+2} + b^{-1}q^{-2\nu-2}) + \gamma = 0,\nonumber\\
&& \left(b^{-1}q^{\nu-1/2} + ca^{-1}q^{-\nu-3/2}\right)(\alpha+\beta) + \delta =0\nonumber
\eeqa
from which we get (\ref{sol1}).
\end{proof}

This lemma thus states that the generator $A$ of the equitable presentation of the Askey-Wilson algebra is obtained from $B$ by tridiagonalization. Stated differently, the upshot is that $B$ i.e. $\tG_1$ together with its tridiagonalization (\ref{Texp})
generate the Askey-Wilson algebra.

The tridiagonalized form of $A$ can be inverted to express $B$ in terms of $A$ as follows.
\begin{lem}  Consider $A,B$ as the $q-$difference operators obtained from (\ref{ABC}) with (\ref{XYZ}). One has:
\beqa
B \equiv \alpha \bO A + \beta A \bO + \gamma \bO  + \delta ,\label{TBexp}
\eeqa
with
\beqa
\alpha&=& -\frac{q^{-1}(q^{3/2+\nu}c-q^{-3/2-\nu}a^{-1}b^{-1})}{q^2-q^{-2}} ,\quad \beta= \frac{(q^{-3/2+\nu}c-q^{-1/2-\nu}a^{-1}b^{-1})}{q^2-q^{-2}} ,\label{sol2}\\
 \gamma &=&  b^{-1}q^{-1/2+\nu} +ca^{-1}q^{-3/2-\nu},\nonumber\\
 \delta&=&  \frac{(q^{2\nu+1}b+ q^{-2\nu-1}b^{-1}  + ac + a^{-1}c^{-1})}{q+q^{-1}}.\nonumber
\eeqa
\end{lem}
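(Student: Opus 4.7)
The strategy is to mirror the proof of Lemma \ref{ABz}, with the roles of $A$ and $B$ swapped and those of $O$ and $\bO$ interchanged. The ingredients available are the explicit $q$-difference realizations of $A$ (cf.\ (\ref{A})) and $B=\tG_1$ (cf.\ (\ref{B})), the definition (\ref{ObO}) of $\bO$ together with $O\bO = \bO O = 1$, and the fact that $\bO$ realizes multiplication by $z^{-1}$ in the holomorphic realization.

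First, I would derive the analog of the monomial-level commutation identities used in the proof of Lemma \ref{ABz}, but now for $\bO$ passing through the ingredients appearing in $A$. Namely, I would compute $\bO\, S_{+} q^{\pm s_3}$, $S_{+} q^{\pm s_3}\, \bO$, $\bO\, q^{\pm 2 s_3}$, and $q^{\pm 2 s_3}\, \bO$, expressing each as a linear combination of basis monomials in $\{S_{\pm} q^{\pm s_3}, q^{\pm 2 s_3}, \bO, 1\}$. Structurally these are parallel to the identities used previously, but the $q$-shift carried by $\bO$ has weight $-1$ in $z$ rather than $+1$, which accounts for the different $q$-powers in (\ref{sol2}) compared to (\ref{sol1}).

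Next, substituting the explicit form (\ref{A}) of $A$, I would expand $\alpha\, \bO A + \beta A\, \bO + \gamma\, \bO + \delta$ using these identities and collect the result in the basis $\{S_+ q^{\pm s_3}, S_- q^{\pm s_3}, q^{\pm 2 s_3}, \bO, 1\}$. Since $B$ in (\ref{B}) contains only $S_- q^{\pm s_3}$, $q^{\pm 2 s_3}$ and constant terms, the matching requires the coefficients of $S_+ q^{\pm s_3}$ and of $\bO$ to vanish, while the $S_- q^{\pm s_3}$, $q^{\pm 2 s_3}$, and scalar coefficients must agree with those of $B$. This yields a linear system in $\alpha, \beta, \gamma, \delta$ whose unique solution will be (\ref{sol2}).

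The main obstacle is the sheer volume of calculation: $A$ has more monomial pieces than $B$, so the expansion has more terms to manage than in Lemma \ref{ABz}, and one must handle carefully the formal power series $(1-q^{-2\nu}q^{\pm 2s_3})^{-1}$ implicit in $\bO$. Consistency of the apparently overdetermined linear system is not accidental; it reflects the Askey-Wilson relation $B + (qCA-q^{-1}AC)/(q^2-q^{-2})$ being a scalar, together with the fact that the generator $C$ plays the structural role vis-\`a-vis $\bO$ that $C$ played vis-\`a-vis $O$ in Lemma \ref{ABz}. This algebraic guarantee ensures that the four parameters $\alpha, \beta, \gamma, \delta$ can indeed absorb all the constraints imposed by the matching.
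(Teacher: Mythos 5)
Your proposal is correct and follows essentially the same route as the paper: it mirrors Lemma \ref{ABz} by reducing $\bO S_+q^{\pm s_3}$, $S_+q^{\pm s_3}\bO$ and $q^{\pm 2s_3}\bO$ to the monomial basis, expanding $\alpha\,\bO A + \beta\, A\,\bO + \gamma\,\bO + \delta$, and matching coefficients against (\ref{B}) to determine $\alpha,\beta,\gamma,\delta$. The only cosmetic difference is that after reduction no $S_+q^{\pm s_3}$ terms actually survive, so the vanishing condition you anticipate for them is automatic; the genuine constraints come from the $S_-q^{\pm s_3}$, $q^{\pm 2s_3}$, $\bO$ and scalar coefficients, exactly as in the paper.
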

\begin{proof}
Recall $A$ is given by (\ref{A}). Consider the r.h.s. of (\ref{TBexp}). The action of $\bO$ on the monomials $S_+q^{\pm s_3},q^{\pm 2s_3}$ is such that:
\beqa
\bO S_+q^{s_3} &=& \frac{q^{\nu} -q^{-\nu}q^{2s_3}}{q-q^{-1}}, \quad  \bO S_+q^{-s_3} = \frac{-q^{-\nu} +q^{\nu}q^{-2s_3}}{q-q^{-1}},\nonumber\\
S_+q^{s_3}\bO &=& \frac{q^{\nu} -q^{-\nu-2}q^{2s_3}}{q-q^{-1}}, \quad  S_+q^{-s_3}\bO = \frac{-q^{-\nu} +q^{\nu+2}q^{-2s_3}}{q-q^{-1}},\nonumber\\
q^{-2s_3}\bO &=& -(q-q^{-1})q^{\nu+2}S_-q^{-s_3} +q^{2\nu+2}\bO,\quad q^{2s_3}\bO = (q-q^{-1})q^{-\nu-2}S_-q^{s_3} +q^{-2\nu-2}\bO.\nonumber
\eeqa
 By straightforward calculations, we obtain:
\beqa
\alpha \bO A + \beta A\bO + \gamma \bO  + \delta &=& a^{-1}(q-q^{-1})q^{-\nu}(\alpha+\beta q^{-2})S_-q^{s_3} - a(q-q^{-1})q^{\nu}(\alpha+\beta q^{2})S_-q^{-s_3} \nonumber\\
&& -  bc^{-1}q^{3/2-\nu}(\alpha+\beta q^{-2})q^{2s_3} -   aq^{\nu+1/2}(\alpha+\beta q^{2})q^{-2s_3} \nonumber\\
&&   + \left( \alpha(aq^{2\nu} + a^{-1}q^{-2\nu}) + \beta (aq^{2\nu+2} + a^{-1}q^{-2\nu-2}) + \gamma \right)\bO\nonumber\\
&&  + \left(aq^{1/2-\nu} + bc^{-1}q^{3/2+\nu}\right)(\alpha+\beta) + \delta.\nonumber
\eeqa
Compare the expression above with (\ref{B}), we obtain the constraints:
\beqa
&& c= -(\alpha + q^{-2}\beta)q^{3/2-\nu},\quad b^{-1}= - (\alpha + q^2\beta)aq^{\nu+1/2}, \nonumber\\
&&  \alpha(aq^{2\nu} + a^{-1}q^{-2\nu}) + \beta (aq^{2\nu+2} + a^{-1}q^{-2\nu-2}) + \gamma = 0,\nonumber\\
&& \left(aq^{1/2-\nu} + bc^{-1}q^{3/2+\nu}\right)(\alpha+\beta) + \delta =0\nonumber
\eeqa
from which we get (\ref{sol2}).
\end{proof}

\section{The big $q-$Jacobi polynomials}
We shall now carry for the big q-Jacobi polynomials an analysis similar to the one that was given in Section 3 for the little q-Jacobi polynomials; namely, identify the specialization of the parameters in $G_1$ that will lead to the big q-Jacobi operator, identify the eigenfunctions of $W_0$ in that case, determine the action of the restricted $G_1$ on these functions and arrive at the explicit expression of the big q-Jacobi polynomials by expanding one eigenbasis over the other.
\subsection{The second order q-difference operator $G_1$}
Recall (\ref{holo}). By straightforward replacements, from (\ref{G1}) we get for the specialization\footnote{We choose $\mu_0=c_0=0$ in (\ref{W}).} $g'_2=g'_3=0$:
\beqa
G_1|_{g'_2=g'_3=0} &\mapsto& \underbrace{ \left( \frac{g_1q^{-1}}{(q-q^{-1})^2}\frac{1}{z^2}  +\frac{g_3q^{-\nu}}{(q-q^{-1})}\frac{1}{z} + g_4q^{-2\nu} \right) }_{A_0}T_+^2 \label{G1exp}\\
&& + \underbrace{\left( \frac{g_1q}{(q-q^{-1})^2}\frac{1}{z^2}  -\frac{g_2q^{\nu}}{(q-q^{-1})}\frac{1}{z} + g_5q^{2\nu} \right) }_{B_0}T_-^2  \nonumber\\
&& + \underbrace{\left(  -\frac{g_1(q+q^{-1})}{(q-q^{-1})^2}\frac{1}{z^2}  -\frac{(g_3q^{-\nu} - g_2q^{\nu} ) }{(q-q^{-1})}\frac{1}{z} + g_6\right)}_{C_0}.\nonumber
\eeqa

\begin{prop} For  $g'_2=g'_3=0$, the operator $G_1$ is diagonalized by the big q-Jacobi polynomials.
\end{prop}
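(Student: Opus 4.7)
The plan is to mirror the proof of Proposition~\ref{p41} (the little $q$-Jacobi case), but now without the simplification $g_1=0$. Concretely, I will recall the second-order $q$-difference equation satisfied by the big $q$-Jacobi polynomials $P_n(z;\ma,\mb,\mc;q^2)$ from \cite[eq.~(3.5.5)]{KS}, adapted to base $q^2$:
\beqa
\Lambda_n\, y(z) = B(z)\,y(q^2z) + D(z)\,y(q^{-2}z) - (B(z)+D(z))\,y(z),\nonumber
\eeqa
with eigenvalue $\Lambda_n = q^{-2n}(1-q^{2n})(1-\ma\mb q^{2n+2})$ and rational coefficients
\beqa
B(z)= \ma q^2\,\frac{(z-1)(\mb z - \mc)}{z^2},\qquad D(z)=\frac{(z-\ma q^2)(z-\mc q^2)}{z^2},\nonumber
\eeqa
each of which is a polynomial of degree two in $1/z$. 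The structural observation, already visible in (\ref{G1exp}), is that $A_0$, $B_0$ and $C_0$ are themselves polynomials of degree two in $1/z$, so that $G_1|_{g'_2=g'_3=0}$ has exactly the right shape to be compared, term by term in $z^{-2}$, $z^{-1}$ and the constant term, with the big $q$-Jacobi operator.

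The proof then reduces to identifying the parameters. I would compare the $z^{-2}$ coefficients of $A_0$ and $B_0$ with those of $B(z)$ and $D(z)$, then the $z^{-1}$ coefficients, and finally the constant terms. This yields a dictionary expressing $\ma,\mb,\mc$ (and $\nu$) in terms of $g_1,g_2,g_3,g_4,g_5$; for instance the ratio of the two $z^{-2}$ coefficients determines $\mc$ and the common dependence on $g_1$ fixes the overall normalization, while the constant-term identification determines $\ma\mb$. Consistency is automatic because both $A_0+B_0+C_0$ (by direct inspection of (\ref{G1exp})) and $B(z)+D(z)-(B(z)+D(z))$ annihilate the $z^{-2}$ and $z^{-1}$ terms, leaving only an additive constant that can be absorbed into $g_6$ and hence into the eigenvalue. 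The resulting eigenvalue equation then reads
\beqa
G_1|_{g'_2=g'_3=0}\,y(z) = \bigl( g_5 q^{2\nu}(q^{-2n} + \ma\mb q^{2n+2}) + g_6 \bigr)\,y(z),\nonumber
\eeqa
in direct analogy with (and generalizing) the formula in the proof of Proposition~\ref{p41}.

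The main (minor) obstacle is bookkeeping: there are now three $z^{-k}$ coefficients in each of the three functions $A_0,B_0,C_0$, giving nine matching conditions, but only five free structure constants $g_1,\dots,g_5$ (together with $\nu$ and an overall scale). The content of the proof is therefore to check that the overdetermined system is in fact consistent, i.e.\ that the relations between the $z^{-2}$ and $z^{-1}$ terms dictated by the big $q$-Jacobi formulas for $B(z)$ and $D(z)$ are automatically compatible with the ratios
\beqa
\frac{[A_0]_{1/z^2}}{[B_0]_{1/z^2}} = q^{-2},\qquad
\frac{[C_0]_{1/z^2}}{[A_0]_{1/z^2}} = -(q+q^{-1})q,\nonumber
\eeqa
imposed by (\ref{G1exp}). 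These compatibility checks are straightforward because the $z^{-2}$ coefficients of $B(z)$ and $D(z)$ are in ratio $q^{-2}$ (namely $\ma q^2\mc$ versus $\ma\mc q^4$) and their sum enters $C_0$ with the prescribed factor. With this verified, the dictionary is well defined and the big $q$-Jacobi polynomials are eigenfunctions of $G_1|_{g'_2=g'_3=0}$ with the stated eigenvalue.
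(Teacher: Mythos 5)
Your proposal is correct and follows essentially the same route as the paper: realize $G_1|_{g'_2=g'_3=0}$ as a second-order $q$-difference operator with coefficients $A_0,B_0,C_0$ quadratic in $1/z$, match them against the big $q$-Jacobi difference equation of \cite{KS} (your factored $B(z),D(z)$ expand to exactly the paper's $B(z),\overline{B}(z)$), and read off the dictionary $\ma\mc$, $\ma(\mb+\mc)$, $\ma\mb$, $\ma+\mc$ in terms of $g_1,\dots,g_5$ together with the eigenvalue $g_5q^{2\nu}(q^{-2n}+\ma\mb q^{2n+2})+g_6$. Your explicit consistency checks (the $q^{-2}$ ratio of the $z^{-2}$ coefficients and the fact that $A_0+B_0+C_0$ is a constant absorbed into the eigenvalue) are a welcome addition that the paper leaves implicit.
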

\begin{proof} Compare the operator $G_1$ written as (\ref{G1exp}) with the second-order q-difference operator\footnote{Here $q^2$ is $q$ of \cite{KS}.} (3.5.4) in \cite{KS}. Denote the big q-Jacobi polynomials as:
\beqa
y(z) \equiv P_n(z;\ma,\mb,\mc;q^2)\nonumber
\eeqa
They satisfy the second-order q-difference equation:
\beqa
q^{-2n}(1-q^{2n})(1-\ma\mb q^{2n+2})y(z) = B(z) y(q^2z) + \overline{B}(z)y(q^{-2}z) - (B(z)+\overline{B}(z))y(z),\label{qdiff}
\eeqa
where
\beqa
B(z) &=& \ma\mc q^2 \frac{1}{z^2} - \ma(\mb+\mc)q^2 \frac{1}{z} + \ma\mb q^2,\nonumber\\
\overline{B}(z)&=&\ma\mc q^4 \frac{1}{z^2} - (\ma+\mc)q^2 \frac{1}{z} + 1,\nonumber
\eeqa
The exact relation between the parameters $g_i$ and the parameters entering in the big q-Jacobi polynomials is as follows:
\beqa
G_1 y(z) =\left( g_5q^{2\nu}(q^{-2n}+\ma\mb q^{2n+2}) +g_6\right)y(z)\nonumber
\eeqa
and 
\beqa
&&\ma\mc = \frac{g_1}{g_5}\frac{q^{-2\nu-3}}{(q-q^{-1})^2},\quad \ma(\mb+\mc)= -\frac{g_3}{g_5}\frac{q^{-3\nu-2}}{(q-q^{-1})},\quad  \ma\mb = \frac{g_4}{g_5}q^{-4\nu-2},\quad \ma+\mc =\frac{g_2}{g_5}\frac{q^{-\nu-2}}{(q-q^{-1})}.
\eeqa
\vspace{2mm}

\begin{rem}Note that if instead we would like to consider the spectral problem for the big q-Jacobi polynomials 
\beqa
y(q^2\mb z) \equiv P_n(q^2\mb z;\ma,\mb,\mc;q^2),
\eeqa
the substitution $z \rightarrow q^2\mb z$ into (\ref{qdiff}) gives the following identification:
\beqa
&&\frac{\ma\mc}{\mb^2} = \frac{g_1}{g_5}\frac{q^{-2\nu+1}}{(q-q^{-1})^2},\quad \frac{\ma(\mb+\mc)}{\mb}= -\frac{g_3}{g_5}\frac{q^{-3\nu}}{(q-q^{-1})},\quad  \ma\mb = \frac{g_4}{g_5}q^{-4\nu-2},\quad \frac{\ma+\mc}{\mb} =\frac{g_2}{g_5}\frac{q^{-\nu}}{(q-q^{-1})}.
\eeqa
For the special choice $\mc=0$, note that the big q-Jacobi polynomial $P_n(q^2\mb z;\mb,\ma,0;q^2)$ can be expressed in terms of the little q-Jacobi polynomial $p_n( z;\ma,\mb;q^2)$. See \cite{KS} for details.
\end{rem}

\end{proof}

\subsection{Eigenfunctions and the explicit expression of the big $q-$Jacobi polynomials}
Consider the eigenfunctions of $W_0$ in (\ref{W}) for $g'_2=g'_3=0$. They coincide with the ones given in Lemma \ref{lem1}.
On these eigenfunctions of $W_0$, we now consider the action of $G_1$.
\begin{lem} One has:
\beqa
G_1 f_n(z) = a_n f_n(z) + b_n f_{n-1}(z)\label{actG1}
\eeqa
with
\beqa
a_n&=& g_5q^{2\nu}(q^{-2n}+\ma\mb q^{2n+2}) +g_6,\nonumber\\
b_n&=&  - g_5 q^{2\nu}q^{-2n}(1-q^{2n})(1-q^{2n}\ma)(1-q^{2n}\mc).\nonumber
\eeqa
\end{lem}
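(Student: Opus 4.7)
My plan is to mirror the proof of the analogous lemma for $\tG_1$ in Section 3, adjusting it to accommodate the additional $1/z^2$ contribution present in $G_1$. Using the $q$-difference realization $G_1 = A_0 T_+^2 + B_0 T_-^2 + C_0$ from (\ref{G1exp}) and the explicit form $f_n(z)=(\gamma_0 z;q^2)_n$ with $\gamma_0=\frac{\epsilon_0}{\oc_0}(1-q^2)q^{-\nu-1}$ given by Lemma \ref{lem1}, I first record the telescoping factorizations
\begin{equation*}
T_+^2 f_n(z) = (1-\gamma_0 q^{2n-2}z)(1-\gamma_0 q^{2n}z)\,P(z), \qquad T_-^2 f_n(z) = (1-\gamma_0 q^{-2}z)(1-\gamma_0 z)\,P(z),
\end{equation*}
\begin{equation*}
f_n(z) = (1-\gamma_0 z)(1-\gamma_0 q^{2n-2}z)\,P(z), \qquad f_{n-1}(z) = (1-\gamma_0 z)\,P(z),
\end{equation*}
where $P(z) = (1-\gamma_0 q^2 z)(1-\gamma_0 q^4 z)\cdots(1-\gamma_0 q^{2n-4}z)$ is a common factor of both sides of (\ref{actG1}).

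Cancelling $P(z)$ and multiplying through by $z^2$ converts (\ref{actG1}) into a polynomial identity in $z$ of degree at most four. Matching the top-degree coefficient recovers $a_n = g_5 q^{2\nu}(q^{-2n}+\ma\mb q^{2n+2})+g_6$, consistent with the eigenvalue of $G_1$ already identified in the preceding Proposition. To pin down $b_n$, I would evaluate at the point $z_1 = \gamma_0^{-1}q^{-2n}$, where the factor $(1-\gamma_0 q^{2n}z)$ vanishes so that the $A_0$-contribution drops out on the left while $(1-\gamma_0 z)$ and $(1-\gamma_0 q^{2n-2}z)$ remain nonzero; this isolates a single equation for $b_n$ in which the already-determined $a_n$ also appears. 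Using the parameter dictionary from that Proposition, in particular $g_1/g_5 = \ma\mc(q-q^{-1})^2 q^{2\nu+3}$, a routine simplification then produces $b_n = -g_5 q^{2\nu}q^{-2n}(1-q^{2n})(1-q^{2n}\ma)(1-q^{2n}\mc)$. The extra factor $(1-q^{2n}\mc)$ relative to the little $q$-Jacobi case arises entirely from the $g_1/z^2$ piece of $A_0$ and $B_0$.

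The main technical obstacle is to verify that the polynomial identity obtained after multiplying by $z^2$ really does close with only two free parameters $a_n, b_n$ on the right, despite the left carrying three a priori independent contributions from $A_0,B_0,C_0$. The crucial input is that the coefficients of $1/z^2$ in $A_0,B_0,C_0$ sum to zero, namely $\frac{g_1}{(q-q^{-1})^2}(q^{-1}+q-(q+q^{-1}))=0$, and the same vanishing holds for the $1/z$ coefficients. Combined with the specific quadratic factors $(1-\gamma_0 q^{2n-2}z)(1-\gamma_0 q^{2n}z)$, $(1-\gamma_0 q^{-2}z)(1-\gamma_0 z)$ and $(1-\gamma_0 z)(1-\gamma_0 q^{2n-2}z)$ that multiply each of the three terms, the singular parts combine into polynomial corrections of degree at most two which are absorbed by the right-hand side. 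Once this structural closure is in place, the two remaining independent coefficients suffice to fix $a_n$ and $b_n$ uniquely as stated.
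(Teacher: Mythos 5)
Your proposal follows the paper's proof essentially verbatim: pull the common factor $(1-\gamma_0 q^{2}z)\cdots(1-\gamma_0 q^{2n-4}z)$ out of both sides of (\ref{actG1}), using exactly the factorizations of $T_{\pm}^2 f_n$, $f_n$ and $f_{n-1}$ that the paper writes down, and then match the remaining low-degree identity to determine $a_n$ and $b_n$. The only point your closure argument leaves implicit is that, besides the cancellation of the $1/z^{2}$ and $1/z$ parts (which indeed holds identically), one also needs the quotient to be divisible by $(1-\gamma_0 z)$, i.e.\ $A_0(\gamma_0^{-1})=0$ --- a genuine constraint tying $\gamma_0=\frac{\epsilon_0}{\oc_0}(1-q^2)q^{-\nu-1}$ to $g_1,g_3,g_4$ through $\epsilon_0,\oc_0$ --- but this is part of the same routine verification that the paper itself subsumes under ``the coefficients $a_n$ and $b_n$ are determined uniquely.''
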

\begin{proof} Recall (\ref{G1exp}). The l.h.s. of (\ref{actG1}) reads:
\beqa
G_1 f_n(z) &=& \left(A_0 (1-\gamma_0 q^{2n-2}z)(1-\gamma_0 q^{2n}z) + B_0 (1-\gamma_0 q^{-2}z)(1-\gamma_0 z) \right.\nonumber\\
&& \left.+ C_0 (1-\gamma_0 z)(1-\gamma_0 q^{2n-2}z)\right)(1-\gamma_0 q^{2}z)\cdots (1-\gamma_0 q^{2n-4}z),\nonumber
\eeqa
where  $\gamma_0=\frac{\epsilon_0}{\oc_0}(1-q^2)q^{-\nu-1}$. The r.h.s reads:
\beqa
\left(a_n (1-\gamma_0 z)(1-\gamma_0 q^{2n-2}z) + b_n(1-\gamma_0 z)\right)(1-\gamma_0 q^{2}z)\cdots (1-\gamma_0 q^{2n-4}z).\nonumber
\eeqa 
Equating both sides of the equation (\ref{acttG1}), the coefficients $a_n$ and $b_n$ are determined uniquely. 
\end{proof}
\begin{rem} Observe that the coefficient $a_n$ coincides with the spectrum of $G_1$, as expected ($G_1$ is lower triangular in the basis $\{f_n\}$).
\end{rem}

Next, we are interested in the overlap coefficients between the eigenfunctions of $W_0$ (the $f_n(z)$) and the eigenfunctions of $G_1$ (the big q-Jacobi polynomials). The proof of the following proposition is straightforward, by analogy with the proof of Prop. \ref{prop2}
\begin{prop} One has:
\beqa
P_n(q^2 \mb z;\ma,\mb,\mc;q^2)= \sum_{s=0}^{\infty} \gamma'_{n,s}f_s(z) \label{expPn}
\eeqa
where
\beqa
\gamma'_{n,s}=  \frac{ (q^{-2n};q^2)_s (\ma\mb q^{2n+2};q^2)_s}{(q^2\ma;q^2)_s (q^2\mc;q^2)_s (q^2;q^2)_s}q^{2s} .\label{gam2}
\eeqa
\end{prop}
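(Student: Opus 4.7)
The plan is to mimic the proof of Proposition \ref{prop2} step by step, adapting it to the slightly richer structure of $b_n$ in the big $q$-Jacobi case. First I would note that by the preceding proposition the function $P_n(q^2\mb z;\ma,\mb,\mc;q^2)$ is an eigenfunction of $G_1$ with eigenvalue $a_n=g_5q^{2\nu}(q^{-2n}+\ma\mb q^{2n+2})+g_6$. I would then write the expansion \eqref{expPn} and apply $G_1$ to both sides. Using the bidiagonal action \eqref{actG1}, the equation $G_1P_n=a_nP_n$ forces, for each $s\geq 0$, the two-term recurrence
\beqa
a_n\gamma'_{n,s}=a_s\gamma'_{n,s}+b_{s+1}\gamma'_{n,s+1},\nonumber
\eeqa
which telescopes to
\beqa
\gamma'_{n,s}=\gamma'_{n,0}\,\frac{(a_n-a_0)(a_n-a_1)\cdots(a_n-a_{s-1})}{b_1b_2\cdots b_s}.\nonumber
\eeqa

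The next step is to evaluate both products explicitly. Since $a_n$ has exactly the same form as in the little $q$-Jacobi case, the numerator is unchanged:
\beqa
\prod_{k=0}^{s-1}(a_n-a_k)=(-g_5q^{2\nu})^s q^{-s(s-1)}(q^{-2n};q^2)_s(\ma\mb q^{2n+2};q^2)_s.\nonumber
\eeqa
The denominator, however, now carries the extra factor $(1-q^{2k}\mc)$ coming from the big $q$-Jacobi form of $b_k$, and the factor $(1-q^{2k}\mb)$ of the little case is replaced by $(1-q^{2k}\ma)$. A straightforward factorization therefore yields
\beqa
\prod_{k=1}^{s}b_k=(-g_5q^{2\nu})^s q^{-s(s+1)}(q^2;q^2)_s(q^2\ma;q^2)_s(q^2\mc;q^2)_s.\nonumber
\eeqa
Dividing, the prefactor $(-g_5q^{2\nu})^s$ cancels and the net power of $q$ contributes $q^{2s}$, giving
\beqa
\gamma'_{n,s}=\gamma'_{n,0}\,\frac{(q^{-2n};q^2)_s(\ma\mb q^{2n+2};q^2)_s}{(q^2\ma;q^2)_s(q^2\mc;q^2)_s(q^2;q^2)_s}\,q^{2s}.\nonumber
\eeqa

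Finally I would fix the normalization by setting $\gamma'_{n,0}=1$, which matches the standard ${}_3\phi_2$ representation of $P_n(q^2\mb z;\ma,\mb,\mc;q^2)$ from \cite{KS} (in contrast to the little $q$-Jacobi case, where a nontrivial $\gamma_{n,0}$ was chosen to align with the $s=0$ value of the corresponding ${}_3\phi_2$). This reproduces exactly \eqref{gam2} and proves the identity \eqref{expPn}. The only real bookkeeping hurdle is the explicit telescoping of $\prod b_k$ with its three Pochhammer factors; everything else is a direct transcription of the argument used for the little $q$-Jacobi polynomials, which is why the remark can legitimately call the proof straightforward by analogy with Proposition \ref{prop2}.
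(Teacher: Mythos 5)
Your proposal is correct and is precisely the argument the paper intends: it declares the proof ``straightforward, by analogy with the proof of Prop.~3.2,'' and your step-by-step adaptation (same eigenvalue recurrence, same numerator product, denominator product now carrying $(q^2\ma;q^2)_s(q^2\mc;q^2)_s$ in place of $(q^2\mb;q^2)_s$, normalization $\gamma'_{n,0}=1$ fixed against the standard ${}_3\phi_2$ form) is exactly what that analogy amounts to. The bookkeeping checks out, including the cancellation of $(-g_5q^{2\nu})^s$ and the residual factor $q^{2s}$.
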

\begin{rem} In particular,  note that:
\beqa
 p_n(z;\ma,\mb;q^2) =  (-q^2 \mb)^{-n} q^{-n(n-1)} \frac{(q^2\mb;q^2)_n}{(q^2\ma;q^2)_n}  P_n(q^2 \mb z;\mb,\ma,0;q^2).
\eeqa
\end{rem}

\section{A connection between some big $q-$Jacobi polynomials and special little $q-$Jacobi polynomials by tridiagonalization}
Much as in \cite{IK1} and \cite{IK2} we shall  supplement the observations of the preceding section by noting that a large class of big q-Jacobi polynomials can be obtained from a particular tridiagonalization of the special little q-Jacobi operator with (\ref{lab}) for $\ma = q^2$.
\begin{lem}  Consider the little q-Jacobi operator $\tG_1$ and the big q-Jacobi operator $G_1$ of the form:
\beqa
G_1&=&g_1 S_-^2 + g_2 S_-q^{-s_3} +  g_3 S_-q^{s_3} + g_4 q^{2s_3} + g_5q^{-2s_3} + g_6,\\
\tG_1&=& \tg_2 S_-q^{-s_3} +  \tg_3 S_-q^{s_3}  + \tg_4 q^{2s_3} + \tg_5q^{-2s_3} + \tg_6.\label{bigqJ}
\eeqa
For $\tg_3=-q^{2\nu+2}\tg_2$, one has:
\beqa
G_1 \equiv  \beta \tG_1 \bO +  \gamma \tG_1 + \gamma' \bO + \delta \label{TGGexp}
\eeqa
with (\ref{ObO}) and
\beqa
\qquad  \beta&=&  \frac{(q^{\nu+1}g_2 + q^{-\nu-1}g_3)}{(q-q^{-1})(q^{-2\nu-3}\tg_4 - q^{2\nu+3}\tg_5)},\label{sol3}\\
 \gamma &=&   \frac{q^{-\nu-1}(q^{\nu+2}g_3\tg_5 + q^{-\nu-2}\tg_4 g_2)}{\tg_2(q^{-2\nu-3}\tg_4 - q^{2\nu+3}\tg_5)}  ,\quad \gamma'=
-\frac{(q^{-2\nu-2}\tg_4 + q^{2\nu+2}\tg_5 + \tg_6)(q^{\nu+1}g_2 +q^{-\nu-1}g_3  )}{(q-q^{-1})(q^{-2\nu-3}\tg_4 - q^{2\nu+3}\tg_5)},
\nonumber\\
 \delta&=& g_6-\gamma \tg_6.\  \nonumber
\eeqa
\end{lem}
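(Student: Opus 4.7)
The plan is to work in the holomorphic $q$-difference realization \eqref{holo}, where $\bO$ acts as multiplication by $z^{-1}$ and both $G_1$ and $\tG_1$ take the explicit second-order $q$-difference operator form recorded in \eqref{G1exp} and \eqref{tG1exp}. The identity \eqref{TGGexp} then becomes an identity between second-order $q$-difference operators whose coefficients are Laurent polynomials in $1/z$, so the whole proof reduces to matching those coefficients.

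First I would compute $\tG_1\bO$. Since $T_\pm(z^{-1}f(z)) = q^{\mp 1} z^{-1} T_\pm f(z)$, one has $T_\pm^2 z^{-1} = q^{\mp 2} z^{-1} T_\pm^2$, so
\[
\tG_1 \bO = \bigl(q^{-2}z^{-1}\tA_0\bigr) T_+^2 + \bigl(q^{2}z^{-1}\tB_0\bigr) T_-^2 + z^{-1}\tC_0,
\]
which pushes the maximal pole order in $z$ from one to two, matching that of $G_1$. Assembling $\beta\tG_1 \bO + \gamma \tG_1 + \gamma'\bO + \delta$ and comparing with $A_0 T_+^2 + B_0 T_-^2 + C_0$ from \eqref{G1exp} gives a system of nine scalar equations, one for each pairing of an operator component ($T_+^2$, $T_-^2$, constant) with a power of $z$ ($z^{-2}$, $z^{-1}$, $z^0$).

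Next I would dispatch the three easy equations at order $z^0$, which yield $\gamma\tg_4 = g_4$, $\gamma\tg_5 = g_5$, and $\delta = g_6 - \gamma\tg_6$. The three equations at order $z^{-2}$ involve only $\beta$ together with $\tg_2,\tg_3$ and $g_1$, and this is where I expect the main obstacle: those three conditions are overdetermined in the single unknown $\beta$, and their consistency is precisely what forces the hypothesis $\tg_3 = -q^{2\nu+2}\tg_2$. Once this ratio is imposed I would verify that the third $z^{-2}$ equation (coming from the constant component of $G_1$, where the factor $q+q^{-1}$ appears) is automatically implied by the first two. Via \eqref{lab} one may also observe that this constraint corresponds to the special little $q$-Jacobi parameter value $\ma = q^2$ announced at the start of the section.

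Finally, with the constraint in force, the three equations at order $z^{-1}$ form a $3\times 3$ linear system for $\beta, \gamma, \gamma'$. Eliminating $\gamma$ between the $T_+^2$ and $T_-^2$ rows isolates $\beta$, and substituting $\tg_3 = -q^{2\nu+2}\tg_2$ produces the expression for $\beta$ in \eqref{sol3}; one then reads $\gamma$ off either of those rows and $\gamma'$ off the $z^{-1}$ coefficient of the constant component of $G_1$. The remaining algebra is a direct rearrangement that should reproduce precisely the formulas listed in \eqref{sol3}.
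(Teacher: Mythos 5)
Your plan is correct and all of its computations check out: in the realization \eqref{holo} one indeed has $T_\pm^2\, z^{-1}=q^{\mp2}z^{-1}T_\pm^2$, the two $z^{-2}$ rows for $T_+^2$ and $T_-^2$ are compatible exactly when $\tg_3=-q^{2\nu+2}\tg_2$ (and the third $z^{-2}$ row, carrying the factor $q+q^{-1}$, is then automatic), and the $z^{-1}$ rows yield precisely the $\beta,\gamma,\gamma'$ of \eqref{sol3}, with $\delta=g_6-\gamma\tg_6$ coming from the $z^0$ row of the constant component. The paper reaches the same conclusion by a parallel but more abstract route: it stays inside $\mathfrak U_q(sl(2))$ augmented by $\bO$, rewrites $S_-^2$ as a combination of $\bO S_-q^{\pm s_3}$, pushes $\bO$ to the left through $\tG_1$ using relations such as $S_-q^{-s_3}\bO=q^2\bO S_-q^{-s_3}-q^{\nu+1}\bO^2$, and matches coefficients of the monomials $S_-^2$, $S_-q^{\pm s_3}$, $q^{\pm2s_3}$, $\bO S_-q^{s_3}$, $\bO$, $\bO^2$; it also starts from the more general ansatz $\alpha\bO\tG_1+\beta\tG_1\bO+\cdots$ and derives $\alpha=0$ along with the constraint on $\tg_3$. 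Your realization-level argument buys concreteness (everything is a Laurent coefficient in $1/z$), while the paper's version establishes the identity at the level of the algebra itself, uniformly in the representation. One small point of framing: the formulas \eqref{sol3} determine $\beta,\gamma,\gamma',\delta$ from $g_2,g_3,g_6$ and the $\tg_i$ only, so the remaining coefficients of $G_1$ are outputs of the identity rather than independent data ($g_1$ is fixed by the $z^{-2}$ rows, and $g_4=\gamma\tg_4$, $g_5=\gamma\tg_5$, exactly as in the paper's list of identifications); your description of the $z^{-2}$ rows as ``overdetermined in the single unknown $\beta$'' tacitly treats $g_1$ as prescribed, whereas it is cleaner to say that the ratio of the $T_+^2$ and $T_-^2$ rows at order $z^{-2}$ is what forces $\tg_3=-q^{2\nu+2}\tg_2$, after which one row defines $g_1$. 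This does not affect the validity of the argument.
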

\begin{proof}
Consider the l.h.s. of (\ref{TGGexp}). The monomial $S_-^2$ is easily reduced in terms of $\bO S_-q^{\pm s_3}$, namely:
\beqa
S_-^2 = -\frac{q^{1-\nu}}{q-q^{-1}}\bO S_- q^{-s_3}  +  \frac{q^{\nu-1}}{q-q^{-1}} \bO S_-q^{s_3}.
\eeqa
Consider the r.h.s.  of (\ref{TGGexp}). Using:
\beqa
S_-q^{-s_3}\bO = q^2 \bO S_- q^{-s_3} - \bO^2 q^{\nu+1},\qquad S_-q^{s_3}\bO = q^{-2} \bO S_- q^{s_3} - \bO^2 q^{-\nu-1},
\eeqa
by straightforward calculations, for a slightly more general combination we obtain:
\beqa
\alpha \bO \tG_1 + \beta \tG_1\bO  +\gamma \tG_1 + \gamma' \bO   + \delta &=& 
-  (\alpha + \beta q^{2})(q-q^{-1})q^{\nu-1}\tg_2  S_-^2 \nonumber\\
&& +  \left((\alpha + \beta q^{-2})(q-q^{-1})q^{-\nu}\tg_4 + \gamma \tg_3\right) S_-q^{s_3} \nonumber\\
&& +  \left(-(\alpha + \beta q^{2})(q-q^{-1})q^{\nu}\tg_5 + \gamma \tg_2\right) S_-q^{-s_3} \nonumber\\
&& + \gamma \tg_4 q^{2s_3} + \gamma \tg_5 q^{-2s_3} + \gamma \tg_6 + \delta\nonumber\\
&& + \left( (\alpha + \beta q^{-2})\tg_3 + (\alpha + \beta q^{2})\tg_2 q^{2\nu-2}\right)\bO S_-q^{s_3} \nonumber\\
&& + \left( (\alpha + \beta q^{-2})\tg_4 q^{-2\nu}  +  (\alpha + \beta q^{2})\tg_5 q^{2\nu}+ (\alpha+\beta)\tg_6+\gamma'\right) \bO \nonumber\\
&& - \beta (\tg_2 q^{\nu+1} + \tg_3 q^{-\nu-1} ) \bO^2. \nonumber
\eeqa
Comparing the r.h.s and l.h.s of (\ref{TGGexp}), we identify:
\beqa
&&   g_1 = -(\alpha + \beta q^2)(q-q^{-1}) q^{\nu-1}\tg_2\ ,\label{coefg}\\
&& g_2=-(\alpha + \beta q^{2})(q-q^{-1})q^{\nu}\tg_5 + \gamma \tg_2\ ,\nonumber\\
&&  g_3 = (\alpha + \beta q^{-2})(q-q^{-1})q^{-\nu}\tg_4 + \gamma \tg_3 \ ,\nonumber\\
&& g_4= \gamma \tg_4,\quad  g_5 =  \gamma \tg_5 \ \nonumber, \quad g_6 = \gamma \tg_6 + \delta \nonumber
\eeqa
together with the constraints:
\beqa
&&\tg_2 q^{\nu+1} + \tg_3 q^{-\nu-1}=0 \ ,\\
&&   (\alpha + \beta q^{-2})\tg_3 + (\alpha + \beta q^{2})\tg_2 q^{2\nu-2} =0\ ,\nonumber\\
&& (\alpha + \beta q^{-2})\tg_4 q^{-2\nu}  +  (\alpha + \beta q^{2})\tg_5 q^{2\nu} + (\alpha+\beta)\tg_6+\gamma'=0\ .\nonumber
\eeqa
The first constraint gives $\tg_3=-q^{2\nu+2}\tg_2$. Inserting in the second constraint, one gets $\alpha=0$. The other equations imply
 (\ref{sol3}).
\end{proof}
Note from eq. (\ref{lab}) that the condition $\tg_3 =  - q^{2\nu +2} \tg_2$ implies that $\ma=q^2$. Thus, for a large parameter set, big q-Jacobi polynomials can be constructed from little q- Jacobi polynomials with that special value of the parameter $\ma$ by using the one-sided tridiagonalization introduced in \cite{IK1} and \cite{IK2}.

\section{Conclusion}
This paper has shown how the little and big q-Jacobi polynomials form bases  for representations of the (specialized) Askey-Wilson algebra. The starting point has been the embedding of the Askey-Wilson algebra in $\mathfrak U_q(sl(2))$ realized in terms of q-difference operators. It then proved possible to identify within the model the defining operators of the polynomials; once this was done the algebraic structure could be used to obtain, for instance, the explicit expressions of these polynomials in a simple fashion. The realization was also instrumental in the observation that the little q-Jacobi operator and a tridiagonalization of this operator realized together the Askey-Wilson algebra since they could readily be identified as generators in the equitable embedding of this algebra in $\mathfrak U_q(sl(2))$.\vspace{1mm}

It should similarly prove interesting to adopt the somewhat opposite viewpoint, that is to start from the little and big q-Jacobi operators and their tridiagonalizations and to look at the conditions for these operators to form Askey-Wilson algebras. Recall that the most general tridiagonalization of the ordinary Jacobi operator led to the standard Heun operator \cite{GrVZ}. The program thus suggested would therefore involve certain q-analogs of the Heun equation. An interesting question that is complementary to those pursued in the present paper is: under what specializations will these q-Heun operators form Askey-Wilson algebras with the original operators. We plan to pursue this matter in a future publication.

\vspace{4mm}

\noindent{\bf Acknowledgements:} We thank Paul Terwilliger for comments. L.V. would like to express his gratitude for the hospitality extended to him
by the Institut Denis-Poisson of the Universit\'e Fran\c cois-Rabelais de Tours as Chercheur
Invit\'e where most of this research was carried out. The research of L.V. is funded in part by
a discovery grant from the Natural Sciences and Engineering Research Council (NSERC) of Canada.
 P.B. is supported by C.N.R.S.  Work of A.Z. is supported by the National Science Foundation of China (Grant No.11771015).

\vspace{0.2cm}

\begin{appendix}

\end{appendix}

\vspace{0.5cm}

\end{document}